\documentclass[12pt]{amsart}
 \usepackage[dvips]{epsfig}
 \usepackage{enumerate}
 \usepackage{amsgen, amstext,amsbsy,amsopn, amsthm, amsfonts,amssymb,amscd,amsmat
 h,euscript,enumerate,url,verbatim,calc,xypic, mathtools}

\oddsidemargin -0.2in
\evensidemargin -0.2in
\textwidth6.2in
\textheight 22cm

 \usepackage{latexsym}
 \usepackage{graphics}
 \usepackage{color}

\newcommand{\proset}{\,\mathrel{\lower 4pt\hbox{$\scriptscriptstyle/$}
\mkern -14mu\subseteq }\,} 
 
 \newtheorem{theorem}{Theorem}[section]
  
 \newtheorem{lemma}[theorem]{Lemma}

\newtheorem{remark}[theorem]{Remark}
 
 \newtheorem{definition}[theorem]{Definition}

\numberwithin{equation}{section}

\def\ker{\operatorname{ker}}

\usepackage{amsmath}
\usepackage{tikz-cd}
 \makeatother 
 
\begin{document}
\date{\today}

\title{On the generators of Nil $K$-groups}
 \author{Sourayan Banerjee and Vivek Sadhu} 
 
 \address{Department of Mathematics, Indian Institute of Science Education and Research Bhopal, Bhopal Bypass Road, Bhauri, Bhopal-462066, Madhya Pradesh, India}
 \email{sourayan16@iiserb.ac.in, vsadhu@iiserb.ac.in, viveksadhu@gmail.com}
 \keywords{Nil K-groups, Binary complexes}
 \subjclass[2010]{Primary 19D06; Secondary 19D35, 18E10.}

 \begin{abstract}
  In this article, we study higher Nil $K$-groups via binary complexes. More particularly, we exhibit an explicit form of generators of higher Nil $K$-groups in terms of binary complexes.
  \end{abstract}

\maketitle
 
\section{Introduction}

Given a commutative ring $R,$ let ${\bf P}(R)$  denote the category of finitely generated projective $R$-modules. Let ${\bf Nil}(R)$ be a category consisting of all pairs $(P, \nu),$ where $P$ is a finitely generated $R$-module and $\nu$ is a nilpotent endomorphism of $P.$ Let ${\rm Nil}_{0}(R)$ denote the kernel of the forgetful map $K_{0}({\bf Nil}(R)) \to K_{0}({\bf P}(R))=: K_{0}(R).$ The group ${\rm Nil}_{0}(R)$ is generated by elements of the form $[(R^{n}, \nu)]- [(R^{n}, 0)]$ for some $n$ and some nilpotent endomorphism $\nu.$ Using these generators, we show that ${\rm Nil}_{0}(R)=0$ provided every finitely generated torsion free $R$-module is projective (see Theorem \ref{nil zero vanish}). The hypothesis on $R$ in the above-mentioned result holds for many well-known classes of rings, such as PID, valuation rings, Dedekind domains, and so on. In fact, in case of integral domains, the hypothesis on $R$ is equivalent to $R$ being a Pr\"{u}fer domain (i.e., a ring which is locally a valuation ring). One of the goals of this article is to determine the generators of higher Nil $K$-groups.

Given an exact category $\mathcal{N},$ let $i\mathcal{N}$ denote the subcategory of $\mathcal{N}$ whose arrows are isomorphisms. Consider $i\mathcal{N}$ as a category of weak equivalences. Then the $K$-theory spectrum of $\mathcal{N}$ defined as $K\mathcal{N}:= Ki\mathcal{N}.$  
The $n$-th $K$-group of $\mathcal{N}$ is defined as $K_{n}\mathcal{N}:= \pi_{n}(Ki\mathcal{N})= K_{n}i\mathcal{N},$ where $n\geq 0$ (see Appendix A of \cite{Gray}). Let ${\rm Nil}(R)$ be the homotopy fibre of the forgetful functor $K{\bf Nil}(R) \to  K{\bf P}(R)=: K(R).$ The $n$-th Nil group ${\rm Nil}_{n}(R)$ is $\pi_{n}{\rm Nil}(R).$ Since the forgetful functor splits,  $K{\bf Nil}(R) \simeq {\rm Nil}(R) \times K(R).$  This implies that $K_{n}{\bf Nil}(R)\cong {\rm Nil}_{n}(R)\bigoplus K_{n}(R)$ for every ring $R.$ 
We also have a natural decomposition $K_{n}(R[t])\cong K_{n}(R)\oplus NK_{n}(R),$ where $NK_{n}(R)= \ker [K_{n}(R[t]) \stackrel{t\mapsto 0}\to K_{n}(R)].$ There is an isomorphism  ${\rm Nil}_{n}(R)\cong NK_{n+1}(R)$ for all $n$ and $R $ (see Theorem V.8.1 of \cite{wei 1}). The Nil $K$-groups measure the failure of algebraic $K$-theory to be homotopy invariant. The group ${\rm Nil}_{n}(R)$ is not finitely generated unless it is trivial (for instance, see Proposition IV.6.7.4 of \cite{wei 1}). It is natural to wonder:
\begin{center}
      {\it How do the generators of the group ${\rm Nil}_{n}(R)$ for $n>0$ look like}?                                                                                                                                                                                                                                                                                                                                                                                                                                                                                                                                                                                                                                                                                                                                                                       
                                                                                                                                                                                                                                                        \end{center}

 In this article, we are able to figure out generators of ${\rm Nil}_{n}(R)$ for $n>0$ using Grayson's technique (see \cite{Gray}) of binary complexes. We refer to section \ref{Nil1} (Theorems \ref{gen for Nil1} and \ref{gen for Nil_n}) for the precise results.  We hope the explicit form of generators of Nil $K$-groups obtained in this article might be useful for further research. 
\section{Preliminaries and Grayson's definition}

In order to define Grayson's $K$-groups, we need an idea of binary complexes. Let us recall the notion from \cite{Gray} for  exact categories. 
\subsection*{Binary chain complexes}
Let $\mathcal{N}$ denote an exact category. A bounded chain complex $N$ in $\mathcal{N}$ is said to be an {\it acylic chain complex} if each differential $d_{i}: N_{i} \to N_{i-1}$ can be factored as $N_{i} \to Z_{i-1} \to N_{i-1}$ such that each $0\to Z_{i} \to N_{i} \to Z_{i-1}\to 0$ is a short exact sequence of $\mathcal{N}.$ Let $C\mathcal{N}$ denote the category of bounded chain complexes in $\mathcal{N}.$ The full subcategory of $C\mathcal{N}$ consisting of  bounded acylic complexes in $C\mathcal{N}$ is denoted by $C^{q}\mathcal{N}.$ The category $C^{q}\mathcal{N}$ is exact.

A chain complex in $\mathcal{N}$ with two differentials (not necessarily distinct) is called a {\it binary chain complex} in $\mathcal{N}.$ In other words, it is a triple $(N_{*}, d, d^{'})$ with $(N_{*}, d)$ and $(N_{*}, d^{'})$ are in $C\mathcal{N}.$ If $d=d^{'}$ then we call a diagonal binary complex. A morphism between two binary complexes $(N_{*}, d, d^{'})$ and  $(\tilde{N}_{*}, \tilde{d}, \tilde{d}^{'})$ is a morphism between the underlying graded objects $N$ and $\tilde{N}$ that commutes with both differentials. The category of bounded binary complexes in $\mathcal{N}$ is denoted by $B\mathcal{N}.$ There is always a diagonal functor (see Definition 3.1 of \cite{Gray}) $$ \Delta: C\mathcal{N} \to B\mathcal{N}, ~{\rm defined ~by}~ \Delta((N_{*}, d))= (N_{*}, d, d).$$ As before, let $B^{q}\mathcal{N}$ denote the full subcategory of $B\mathcal{N}$ whose objects are bounded acylic binary complexes. This is also an exact category.

By iterating, one can define exact category $(B^{q})^{n}\mathcal{N}=B^{q}B^{q}\cdots B^{q}\mathcal{N}$ for each $n\geq 0.$ An object of the exact category  $(B^{q})^{n}\mathcal{N}$ of bounded acylic binary multicomplexes of dimension $n$ in $\mathcal{N}$ is a bounded $\mathbb{Z}^{n}$- graded collection of objects of $\mathcal{N},$ together with a pair of acyclic differentials $d^{i}$ and $\tilde{d^{i}}$ in each direction $1\leq i\leq n,$ where the differentials $d^{i}$ and $\tilde{d^{i}}$ commute with $d^{j}$ and $\tilde{d^{j}}$ whenever $i\neq j.$ Thus, a typical object looks like $(N_{*}, (d^{1}, \tilde{d^{1}}), (d^{2}, \tilde{d^{2}}), \dots, (d^{n}, \tilde{d^{n}})),$ where $N_{*}$ is a
bounded $\mathbb{Z}^{n}$- graded collection of objects of $\mathcal{N}.$ We say that an acyclic binary multicomplex $(N_{*}, (d^{1}, \tilde{d^{1}}), (d^{2}, \tilde{d^{2}}), \dots, (d^{n}, \tilde{d^{n}}))$ is {\it diagonal} if $d^{i}=\tilde{d^{i}}$ for at least one $i.$ 
\subsection*{Grayson's Definition}
In \cite{Nenasheb}, Nenashev gave a description of $K_{1}$-group in terms of generator and relations using the notion of double exact sequences. Motivated by \cite{Nenasheb},  Grayson defined higher $K$-groups in terms of generators and relations using binary complexes (see \cite{Gray}). However, Nenashev's  $K_{1}$-group agrees with Grayson's $K_{1}$-group (see Corollary 4.2 of \cite{KKW}). In the rest of the paper, we assume the following as the definition of higher $K$-groups.

\begin{definition}\label{main def}(see Corollary 7.4 of \cite{Gray}) Let $\mathcal{N}$ be an exact category. For $n\geq 1,$ $K_{n}\mathcal{N}$ is the abelian group having generators $[N],$ one for each object $N$ of $(B^{q})^{n}\mathcal{N}$ and the relations are:
\begin{enumerate}
 \item $[N^{'}] + [N^{''}]=[N]$ for every short exact sequence $0 \to N^{'} \to N \to N^{''}\to 0$ in $(B^{q})^{n}\mathcal{N};$
 \item $[T]=0$ if $T$ is a diagonal acyclic binary multicomplex.
\end{enumerate}
\end{definition}
Note that if we only consider the relation (1) then it is just $K_{0}(B^{q})^{n}\mathcal{N}.$

\begin{definition}\label{diad sub} We denote $T_{\mathcal{N}}^{n}$ as the subgroup of $K_{0}(B^{q})^{n}\mathcal{N}$ generated by the $K_{0}$-classes of all diagonal acyclic binary multicomplexes in $(B^{q})^{n}\mathcal{N}.$  
 
\end{definition}

\begin{remark}\label{k grp as a qt}{\rm
 In view of  Definition \ref{main def},  we have $K_{n}\mathcal{N}\cong K_{0}(B^{q})^{n}\mathcal{N}/T_{\mathcal{N}}^{n}.$}
\end{remark}

\begin{lemma}\label{harris observation} For each $n\geq 1$, there is a split short exact sequence 
$$0 \to K_{n-1}C^{q}\mathcal{N}\stackrel{\Delta}\to  K_{n-1}B^{q}\mathcal{N} \to K_{n}\mathcal{N}\to 0,$$ which is functorial in $\mathcal{N}.$ 
 \end{lemma}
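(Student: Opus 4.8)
The plan is to read the sequence off Grayson's presentation of the $K$-groups (Remark~\ref{k grp as a qt}), applied both to $\mathcal N$ and to the exact category $B^q\mathcal N$, and then to obtain injectivity of $\Delta$ and the splitting for free from the two ``forget one differential'' functors $B^q\mathcal N\to C^q\mathcal N$. The argument will be uniform for all $n\geq 1$; when $n=1$ it reduces to Remark~\ref{k grp as a qt} together with the observation that the diagonal acyclic binary complexes are precisely the objects in the image of $\Delta\colon C^q\mathcal N\to B^q\mathcal N$.

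First I would analyse the subgroups $T^n_{\mathcal N}\subseteq K_0(B^q)^n\mathcal N$ and $T^{n-1}_{B^q\mathcal N}\subseteq K_0(B^q)^{n-1}(B^q\mathcal N)=K_0(B^q)^n\mathcal N$. An object of $(B^q)^n\mathcal N$ is diagonal exactly when it is diagonal in at least one of its $n$ directions. Separating off the ``inner'' direction (the one making up the $B^q\mathcal N$-coordinate when we write $(B^q)^n\mathcal N=(B^q)^{n-1}(B^q\mathcal N)$) from the $n-1$ ``outer'' directions, one sees that a multicomplex diagonal in one of the outer directions is precisely a diagonal multicomplex of the exact category $B^q\mathcal N$, while a multicomplex diagonal in the inner direction is precisely an object in the image of $(B^q)^{n-1}\Delta\colon (B^q)^{n-1}C^q\mathcal N\to(B^q)^n\mathcal N$, the functor obtained by applying $(B^q)^{n-1}$ to $\Delta\colon C^q\mathcal N\to B^q\mathcal N$. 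Passing to $K_0$-classes, this gives
\[
T^n_{\mathcal N}\;=\;T^{n-1}_{B^q\mathcal N}\;+\;\bigl((B^q)^{n-1}\Delta\bigr)_{*}\bigl(K_0(B^q)^{n-1}C^q\mathcal N\bigr)\qquad\text{inside }K_0(B^q)^n\mathcal N.
\]
Quotienting by $T^{n-1}_{B^q\mathcal N}$ first, and invoking Remark~\ref{k grp as a qt} (applied to $\mathcal N$, and to $B^q\mathcal N$ when $n\geq 2$; the case $n=1$ being trivial since $T^0=0$), we get
\[
K_n\mathcal N\;\cong\;K_0(B^q)^n\mathcal N\big/T^n_{\mathcal N}\;\cong\;K_{n-1}B^q\mathcal N\big/\Delta\bigl(K_{n-1}C^q\mathcal N\bigr),
\]
where on the right $\Delta$ is the map of the statement and $\Delta\bigl(K_{n-1}C^q\mathcal N\bigr)$ denotes its image: the image of $T^n_{\mathcal N}$ in $K_0(B^q)^n\mathcal N/T^{n-1}_{B^q\mathcal N}\cong K_{n-1}B^q\mathcal N$ is the image of $\bigl((B^q)^{n-1}\Delta\bigr)_{*}$, and since $K_0(B^q)^{n-1}C^q\mathcal N\twoheadrightarrow K_{n-1}C^q\mathcal N$ and $(B^q)^{n-1}\Delta$ sends multicomplexes diagonal in an outer direction to multicomplexes diagonal in an outer direction (so it descends to $\Delta$), this coincides with $\Delta\bigl(K_{n-1}C^q\mathcal N\bigr)$. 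This establishes exactness at $K_n\mathcal N$ and at $K_{n-1}B^q\mathcal N$.

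It then remains to show that $\Delta$ is injective and that the sequence splits. Here I would use the two exact functors $\top,\bot\colon B^q\mathcal N\to C^q\mathcal N$ remembering only the first, respectively the second, differential of a binary complex. They land in $C^q\mathcal N$ because, by definition, both underlying complexes of an acyclic binary complex are acyclic, and they are exact because short exact sequences in $B^q\mathcal N$ are degreewise. Since $\top\circ\Delta=\operatorname{id}_{C^q\mathcal N}$ and $K_{n-1}$ is a functor from exact categories and exact functors to abelian groups, $\top_{*}\circ\Delta_{*}=\operatorname{id}_{K_{n-1}C^q\mathcal N}$; hence $\Delta$ is split injective with retraction $\top_{*}$, and the sequence of the lemma is split exact. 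Functoriality in $\mathcal N$ is then automatic: an exact functor $\mathcal N\to\mathcal M$ induces exact functors on $C^q$, $B^q$ and all the iterated binary-multicomplex categories, compatibly with $\Delta$, with $\top$, and with the presentation of Remark~\ref{k grp as a qt}, so the whole split sequence is natural.

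I expect the only delicate point to be the bookkeeping in the middle paragraph: fixing an ordering convention on the directions of a binary multicomplex and then checking carefully that ``diagonal in an outer direction'' matches ``diagonal multicomplex of $B^q\mathcal N$'', that ``diagonal in the inner direction'' matches ``in the image of $(B^q)^{n-1}\Delta$'', and that the various quotient maps are mutually compatible, so that the image of $T^n_{\mathcal N}$ in $K_{n-1}B^q\mathcal N$ is exactly $\Delta\bigl(K_{n-1}C^q\mathcal N\bigr)$. Once the presentation is arranged this way, injectivity of $\Delta$, the splitting, and naturality are all formal consequences of $\top\circ\Delta=\operatorname{id}$.
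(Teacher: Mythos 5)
Your proof is correct, and it takes a genuinely different route from the paper's. The paper does not prove this lemma at all: it simply cites Lemma 2.7 of Harris, whose argument runs through the homotopy-theoretic form of Grayson's theorem (the cofibration sequence of spectra $KC^{q}\mathcal{N}\to KB^{q}\mathcal{N}\to\Omega^{-1}K\mathcal{N}$), takes the associated long exact sequence of homotopy groups, and splits it using the retraction $\top$ of $\Delta$. You instead work entirely inside the generators-and-relations presentation that the paper adopts as its definition (Definition \ref{main def} and Remark \ref{k grp as a qt}): the key point is the decomposition $T^{n}_{\mathcal{N}}=T^{n-1}_{B^{q}\mathcal{N}}+\operatorname{im}\bigl((B^{q})^{n-1}\Delta\bigr)_{*}$ inside $K_{0}(B^{q})^{n}\mathcal{N}$, which correctly encodes that a binary multicomplex is diagonal if and only if it is diagonal in the inner direction or in one of the $n-1$ outer directions, and the passage from $\operatorname{im}\bigl((B^{q})^{n-1}\Delta\bigr)_{*}$ to $\Delta\bigl(K_{n-1}C^{q}\mathcal{N}\bigr)$ after quotienting is justified by the surjection $K_{0}(B^{q})^{n-1}C^{q}\mathcal{N}\to K_{n-1}C^{q}\mathcal{N}$. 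The splitting and injectivity via $\top\circ\Delta=\operatorname{id}$ coincide with Harris's. What your route buys is self-containedness relative to the paper's chosen definition of $K_{n}$ (no appeal to spectra), and it makes Remark \ref{further observ to harris} (the case $n=1$, where $T^{0}=0$ and $T^{1}_{\mathcal{N}}=\operatorname{im}\Delta_{*}$) an immediate special case; what it costs is the direction-bookkeeping you flag, which does check out. Both arguments ultimately rest on Grayson's main theorem, since the presentation in Definition \ref{main def} is itself deduced from the spectrum-level statement.
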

 \begin{proof}
  See Lemma 2.7 of \cite{Harris}.
 \end{proof}
\begin{remark}\label{further observ to harris}{\rm
 For $n=1,$ $ K_{0}C^{q}\mathcal{N}\cong {\rm im}(\Delta)=T_{\mathcal{N}}^{1}.$ Thus, the above lemma implies that $K_{0}B^{q}\mathcal{N}\cong T_{\mathcal{N}}^{1} \oplus K_{1}\mathcal{N}.$ }
 
 \end{remark}

\subsection*{Higher Nil $K$-groups via binary complexes}
The category ${\bf Nil}(R)$ is exact. By Remark \ref{k grp as a qt}, we can view the $n$-th Nil $K$-group as a quotient of the Grothendieck group of the category $(B^{q})^{n} {\bf Nil}(R).$ More precisely,  $K_{n}{\bf Nil}(R)\cong K_{0}(B^{q})^{n} {\bf Nil}(R)/T_{{\bf Nil}(R)}^{n}$ for $n\geq 0.$ Here, $T_{{\bf Nil}(R)}^{n}$ is a subgroup of $K_{0}(B^{q})^{n} {\bf Nil}(R)$, as described in Definition \ref{diad sub}. Moreover, there is a split exact sequence
 
 $$ 0\to {\rm Nil}_{n}(R) \to K_{0}(B^{q})^{n} {\bf Nil}(R)/T_{{\bf Nil}(R)}^{n} \to K_{0}(B^{q})^{n} {\bf P}(R)/T_{{\bf P}(R)}^{n} \to 0.$$
 Let $\widetilde{{\rm Nil}}_{n}(R)$(resp. $\widetilde{T}_{R}^{n}$) denote the kernel of the map $K_{0}(B^{q})^{n} {\bf Nil}(R) \to K_{0}(B^{q})^{n} {\bf P}(R)$ (resp. $T_{{\bf Nil}(R)}^{n} \to  T_{{\bf P}(R)}^{n}$). Clearly, the map $T_{{\bf Nil}(R)}^{n} \to  T_{{\bf P}(R)}^{n}$ is surjective. We have the following result, which will be used in section \ref{Nil1}.
 \begin{lemma}\label{useful for nil1}
        For $n\geq 1,$ there is a canonical short exact sequence 
        \begin{equation}\label{seq for nil}
         0 \to \widetilde{T}_{R}^{n} \to \widetilde{{\rm Nil}}_{n}(R) \to {\rm Nil}_{n}(R) \to 0.
        \end{equation} If $n=1$ then this sequence splits, i.e., $\widetilde{{\rm Nil}}_{1}(R)\cong {\rm Nil}_{1}(R)\oplus \widetilde{T}_{R}^{1}.$

       \end{lemma}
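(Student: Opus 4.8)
The plan is to extract the sequence \eqref{seq for nil} from a commutative diagram comparing the two split exact sequences coming from Lemma \ref{harris observation}, applied to $\mathcal{N} = {\bf Nil}(R)$ and $\mathcal{N} = {\bf P}(R)$, and then to use the description $K_n\mathcal{N} \cong K_0(B^q)^n\mathcal{N}/T_\mathcal{N}^n$ from Remark \ref{k grp as a qt}. Concretely, consider the three-by-three diagram whose rows are the short exact sequences $0 \to T_{{\bf Nil}(R)}^n \to K_0(B^q)^n{\bf Nil}(R) \to K_n{\bf Nil}(R) \to 0$ and $0 \to T_{{\bf P}(R)}^n \to K_0(B^q)^n{\bf P}(R) \to K_n{\bf P}(R) \to 0$ (each of these is exact by Remark \ref{k grp as a qt}, and in fact they are the sequences obtained from Lemma \ref{harris observation} after identifying $\im(\Delta)$ with $T_\mathcal{N}^n$ as in Remark \ref{further observ to harris}), together with the vertical forgetful maps induced by ${\bf Nil}(R) \to {\bf P}(R)$. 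The three columns are, from left to right, $\widetilde{T}_R^n \to T_{{\bf Nil}(R)}^n \to T_{{\bf P}(R)}^n$, then $\widetilde{{\rm Nil}}_n(R) \to K_0(B^q)^n{\bf Nil}(R) \to K_0(B^q)^n{\bf P}(R)$, and finally ${\rm Nil}_n(R) \to K_n{\bf Nil}(R) \to K_n{\bf P}(R)$; the first two columns are exact by definition of $\widetilde{T}_R^n$ and $\widetilde{{\rm Nil}}_n(R)$, and the third is exact because the forgetful functor splits, so the forgetful map on $K$-theory is (split) surjective with kernel ${\rm Nil}_n(R)$.

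Now I would apply the snake lemma (or a direct diagram chase) to this diagram. Since the map $T_{{\bf Nil}(R)}^n \to T_{{\bf P}(R)}^n$ is surjective (as noted in the excerpt, because the forgetful functor carries diagonal acyclic binary multicomplexes to diagonal ones and is split), the connecting homomorphism vanishes, and the snake lemma yields the exact sequence $0 \to \widetilde{T}_R^n \to \widetilde{{\rm Nil}}_n(R) \to {\rm Nil}_n(R) \to 0$, which is precisely \eqref{seq for nil}. I should double-check surjectivity of the middle column (hence that the diagram genuinely has exact rows and the right-hand column is as claimed): this follows again from the splitting of the forgetful functor, which splits the entire diagram horizontally in a compatible way.

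For the case $n = 1$, I would argue that the sequence splits by producing a section of $\widetilde{{\rm Nil}}_1(R) \to {\rm Nil}_1(R)$. The natural thing is to use the splittings already available: by Remark \ref{further observ to harris}, $K_0 B^q\mathcal{N} \cong T_\mathcal{N}^1 \oplus K_1\mathcal{N}$ functorially in $\mathcal{N}$, so applying this to ${\bf Nil}(R) \to {\bf P}(R)$ and taking kernels of the vertical forgetful maps gives $\widetilde{{\rm Nil}}_1(R) \cong \widetilde{T}_R^1 \oplus {\rm Nil}_1(R)$ directly — the functoriality of the decomposition is what makes the kernel split as a direct sum. I expect the main obstacle to be bookkeeping: verifying carefully that the identifications $\im(\Delta) = T_\mathcal{N}^1$ and the functoriality in Lemma \ref{harris observation} and Remark \ref{further observ to harris} are compatible with the forgetful functor, so that passing to kernels is legitimate and the resulting maps are the canonical ones. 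Once that compatibility is in hand, both the exactness of \eqref{seq for nil} and its splitting for $n=1$ are formal consequences.
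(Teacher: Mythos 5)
Your proposal is correct and follows essentially the same route as the paper: the authors chase the very same $3\times 3$ diagram (with rows the split forgetful sequences and columns the sequences $T^n \to K_0(B^q)^n \to K_n$), use the surjectivity of $T_{{\bf Nil}(R)}^{n} \to T_{{\bf P}(R)}^{n}$ to get exactness of the kernel sequence, and for $n=1$ deduce the splitting from the (functorial) splitting of $K_{0}B^{q}\mathcal{N}\cong T_{\mathcal{N}}^{1}\oplus K_{1}\mathcal{N}$ in Remark \ref{further observ to harris}. Your phrasing via the snake lemma, and your explicit flagging of the functoriality needed to restrict the splitting to kernels, are only cosmetic refinements of the paper's diagram chase.
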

 \begin{proof}
  The assertion follows by chasing the following commutative diagram
 $$ \begin{CD}
        @.  0 @. 0 @.0  \\
           @. @VVV  @VVV  @VVV  \\
         0 @>>> \widetilde{T}_{R}^{n} @>>> T_{{\bf Nil}(R)}^{n} @>>> T_{{\bf P}(R)}^{n}@>>> 0 \\
            @.  @VVV    @VVV   @VVV \\
        0 @>>> \widetilde{{\rm Nil}}_{n}(R) @>>> K_{0}(B^{q})^{n} {\bf Nil}(R) @>>> K_{0}(B^{q})^{n} {\bf P}(R) @>>> 0 \\
             @. @VVV   @VVV    @VVV  \\
        0 @>>> {\rm Nil}_{n}(R) @>>>K_{n}{\bf Nil}(R) @>>> K_{n}(R) @>>> 0 \\
         @.  @VVV   @VVV  @VVV  \\
        @. 0 @. 0 @. 0,
       \end{CD}$$ where  rows are split exact sequences, the second and third columns are  exact sequences. For $n=1,$ the second column is split exact (see Remark \ref{further observ to harris}). This forces that the first column is split exact provided $n=1.$
 \end{proof}

\section{Vanishing of zeroth Nil $K$-groups}\label{about nil}
 We know  ${\rm Nil}_{0}(R)\cong NK_{1}(R)=\ker [K_{1}(R[t]) \stackrel{t\mapsto 0}\to K_{1}(R)]$ (for instance, see Proposition III.3.5.3 of \cite{wei 1}). The homotopy invariance of $K$-theory is known for regular noetherian rings and  valuation rings (see \cite{KM}). Thus, ${\rm Nil}_{0}(R)=0$ provided $R$ is a regular noetherian or valuation ring. In this section, we discuss a condition on $R$ under which ${\rm Nil}_{0}(R)$ is trivial. 

 For a fix $n,$ $(R^{n}, \nu)$ is an object in ${\bf Nil}(R).$ Assume that $\nu$ is non-zero nilpotent. Since $\nu$ is nilpotent, there exist a least $m\in \mathbb{N}$ such that $\nu^{m}=0$ and $\nu^{r}\neq 0$ for $r< m.$ Then we have a chain of $R$-modules
$$0 \subseteq \ker(\nu)\subseteq \ker(\nu^{2}) \subseteq \dots \subseteq \ker(\nu^{m-1})\subseteq \ker(\nu^{m})=R^{n}.$$

\begin{lemma}\label{tor free}
  $\frac{\ker(\nu^{i+1})}{ker(\nu^{i})} $ is a torsion free $R$-module for $1\leq i \leq m-1.$
\end{lemma}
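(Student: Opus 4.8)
The plan is to show that if $x + \ker(\nu^i)$ is a torsion element of $\ker(\nu^{i+1})/\ker(\nu^i)$, then it is zero, i.e.\ $x \in \ker(\nu^i)$. So suppose $x \in \ker(\nu^{i+1})$ and $rx \in \ker(\nu^i)$ for some nonzerodivisor $r \in R$ (the module $R^n$ is torsion free over $R$, so ``torsion'' here is witnessed by a nonzerodivisor). I want to conclude $\nu^i(x) = 0$.

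The key observation is that $\nu$ is an $R$-module homomorphism, hence commutes with multiplication by $r$. From $rx \in \ker(\nu^i)$ we get $0 = \nu^i(rx) = r\,\nu^i(x)$. Now $\nu^i(x)$ is an element of the torsion free $R$-module $R^n$ (indeed $\nu^i(x) \in \ker(\nu)\subseteq R^n$, since $x\in\ker(\nu^{i+1})$), and $r$ is a nonzerodivisor, so $r\,\nu^i(x) = 0$ forces $\nu^i(x) = 0$. Therefore $x \in \ker(\nu^i)$, and the class of $x$ in the quotient vanishes. This shows every torsion element of $\ker(\nu^{i+1})/\ker(\nu^i)$ is trivial, which is exactly the assertion.

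The argument is essentially a one-line computation once the right element is tracked, so there is no serious obstacle; the only point requiring a little care is the bookkeeping of which ambient module is torsion free. One uses that $R^n$ is torsion free (being free over the domain $R$ — or more generally over a ring in which we interpret torsion via nonzerodivisors), that submodules of torsion free modules are torsion free, and that $\nu$ being $R$-linear is what lets $r$ pass through $\nu^i$. No appeal to the chain being of any particular length is needed; the statement holds verbatim for each $i$ with $1 \le i \le m-1$.
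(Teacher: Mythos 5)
Your argument is correct and coincides with the paper's own proof: both take a torsion element $x+\ker(\nu^i)$ killed by a non-zero-divisor $r$, push $r$ through the $R$-linear map $\nu^i$ to get $r\,\nu^i(x)=0$ in the torsion free module $R^n$, and conclude $\nu^i(x)=0$. The extra bookkeeping you supply (why $R^n$ is torsion free, why $r$ commutes with $\nu^i$) is just a more explicit version of the same one-line computation.
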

\begin{proof}
 Let $x + ker(\nu^{i})$ be a torsion element of $\frac{\ker(\nu^{i+1})}{ker(\nu^{i})}.$ Then there exist a non-zero-divisor $r\in R$ such that $rx\in ker(\nu^{i}).$ This implies that $r\nu^{i}(x)=0$ in $R^{n}.$ So, $x\in ker(\nu^{i}).$
\end{proof}

 \begin{lemma}\label{gen becomes zero}
 $[(R^{n}, \nu)]=[(R^{n}, 0)]$ in $ K_{0}({\bf Nil}(R))$ provided every finitely generated torsion free $R$-module is projective.
\end{lemma}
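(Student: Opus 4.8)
The plan is to use the filtration $0 \subseteq \ker(\nu) \subseteq \ker(\nu^2) \subseteq \cdots \subseteq \ker(\nu^m) = R^n$ together with Lemma~\ref{tor free} and the hypothesis. First I would observe that each quotient $Q_i := \ker(\nu^{i+1})/\ker(\nu^i)$ is finitely generated (a subquotient of $R^n$) and torsion free, hence projective by hypothesis; consequently the short exact sequences $0 \to \ker(\nu^i) \to \ker(\nu^{i+1}) \to Q_i \to 0$ split. Working downward from $i = m-1$, an easy induction shows each $\ker(\nu^i)$ is projective and in fact $R^n \cong \bigoplus_{i=0}^{m-1} Q_i$ as $R$-modules, with $\ker(\nu^j) \cong \bigoplus_{i=0}^{j-1} Q_i$ under this decomposition. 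The key point is that $\nu$ respects the filtration, since $\nu(\ker(\nu^{i+1})) \subseteq \ker(\nu^i)$.

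Next I would use this to compute the class $[(R^n,\nu)]$ in $K_0({\bf Nil}(R))$. Because $\nu$ preserves the filtration, the pair $(R^n, \nu)$ acquires a filtration in ${\bf Nil}(R)$ by the subobjects $(\ker(\nu^i), \nu|_{\ker(\nu^i)})$, with successive quotients $(Q_i, \bar\nu_i)$. But $\nu$ maps $\ker(\nu^{i+1})$ into $\ker(\nu^i)$, so the induced endomorphism $\bar\nu_i$ on $Q_i = \ker(\nu^{i+1})/\ker(\nu^i)$ is zero. Hence in $K_0({\bf Nil}(R))$ we get
\[
[(R^n,\nu)] = \sum_{i=0}^{m-1} [(Q_i, \bar\nu_i)] = \sum_{i=0}^{m-1} [(Q_i, 0)].
\]
On the other hand, since $R^n \cong \bigoplus_i Q_i$ as modules, applying the same additivity to $(R^n, 0)$ with the trivial filtration gives $[(R^n, 0)] = \sum_{i=0}^{m-1} [(Q_i, 0)]$ as well. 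Comparing the two expressions yields $[(R^n,\nu)] = [(R^n,0)]$ in $K_0({\bf Nil}(R))$, which is the claim; the case $\nu = 0$ is trivial.

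The main obstacle, and the step needing the most care, is making precise that the filtration of $(R^n,\nu)$ by the $(\ker(\nu^i),\nu)$ is a genuine admissible filtration in the exact category ${\bf Nil}(R)$ — that is, that each inclusion $(\ker(\nu^i),\nu) \hookrightarrow (\ker(\nu^{i+1}),\nu)$ is an admissible monomorphism with the expected cokernel $(Q_i, 0)$. This reduces to the splitting of the underlying module sequences (which we have from projectivity of $Q_i$) together with the observation that $\nu$ descends to the zero map on each $Q_i$; one then invokes additivity of $K_0$ of an exact category on such filtrations. Everything else — the downward induction showing the $\ker(\nu^i)$ are projective, and the bookkeeping of direct sums — is routine.
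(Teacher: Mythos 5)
Your proposal is correct and follows essentially the same route as the paper: the same kernel filtration $0 \subseteq \ker(\nu) \subseteq \cdots \subseteq \ker(\nu^m)=R^n$, the same use of Lemma~\ref{tor free} plus the hypothesis to split the successive quotients and conclude projectivity of each $\ker(\nu^i)$ by downward induction, and the same observation that $\nu$ induces the zero map on each quotient, so that additivity in $K_{0}({\bf Nil}(R))$ telescopes $[(R^n,\nu)]$ to $[(\bigoplus_i Q_i,0)]=[(R^n,0)]$. The only cosmetic difference is that you compare both classes to the common sum $\sum_i[(Q_i,0)]$ rather than writing out the telescoping chain of equalities as the paper does.
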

\begin{proof}
 We have an exact sequence of $R$-modules
 $$0\to \ker(\nu^{m-1}) \to R^{n} \to \frac{R^{n}}{\ker(\nu^{m-1})} \to 0.$$ By Lemma \ref{tor free}, $\frac{R^n}{\ker(\nu^{m-1})}$ is a finitely generated projective $R$-modules. Thus the sequence splits, and we get that  $\ker(\nu^{m-1})$ is also a finitely generated projective $R$-module. By considering the exact sequence $ 0\to \ker(\nu^{m-2}) \to \ker(\nu^{m-1}) \to \frac{\ker(\nu^{m-1})}{\ker(\nu^{m-2})}\to 0$ and using Lemma \ref{tor free}, we obtain $\ker(\nu^{m-2})$ is  a finitely generated projective $R$-module. Continuing this way, each $\ker(\nu^{i})$ is  a finitely generated projective $R$-module. Observe that the following diagram of exact sequences 
 $$\begin{CD}
    0 @>>> \ker(\nu^{i}) @>>> \ker(\nu^{i+1}) @>>> \frac{\ker(\nu^{i+1})}{\ker(\nu^{i})}@>>> 0\\
    @. @V \nu VV   @V\nu VV   @V \bar{\nu}=0 VV \\
    0 @>>> \ker(\nu^{i}) @>>> \ker(\nu^{i+1}) @>>> \frac{\ker(\nu^{i+1})}{\ker(\nu^{i})}@>>> 0
   \end{CD}$$
is commutative for each $i,$ $1\leq i \leq m-1.$ Thus, we have an exact sequence 
$$ 0 \to (\ker(\nu^{i}), \nu) \to (\ker(\nu^{i+1}), \nu) \to (\frac{\ker(\nu^{i+1})}{\ker(\nu^{i})}, 0) \to 0$$ in ${\bf Nil}(R)$ for  $1\leq i \leq m-1.$ In $ K_{0}({\bf Nil}(R))$, we get
\begin{align*}
 [(R^{n}, \nu)] & = [(\ker(\nu^{m-1}), \nu)] + [(\frac{R^{n}}{\ker(\nu^{m-1})}, 0)]\\
 & = [(\ker(\nu^{m-2}), \nu)] + [(\frac{\ker(\nu^{m-1})}{\ker(\nu^{m-2})}, 0)] + [(\frac{R^{n}}{\ker(\nu^{m-1})}, 0)]\\
 &=  [(\ker(\nu^{2}), \nu)] + \dots + [(\frac{\ker(\nu^{m-1})}{\ker(\nu^{m-2})}, 0)] + [(\frac{R^{n}}{\ker(\nu^{m-1})}, 0)] \\
 &= [(\ker(\nu)\oplus \frac{\ker(\nu^{2})}{\ker(\nu)} \oplus \dots \oplus \frac{R^{n}}{\ker(\nu^{m-1})}, 0)]\\
 & = [(R^{n}, 0)]. 
 \end{align*}\end{proof}
\begin{theorem}\label{nil zero vanish}
 Let $R$ be a commutative ring with unity. Assume that every finitely generated torsion free $R$-module is projective. Then ${\rm Nil}_{0}(R)=0.$
\end{theorem}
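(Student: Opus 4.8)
The plan is to obtain the theorem as an immediate consequence of Lemma \ref{gen becomes zero} together with the description of the generators of ${\rm Nil}_0(R)$ recalled in the introduction. Since the forgetful functor splits, ${\rm Nil}_0(R)$ is the kernel of $K_0({\bf Nil}(R)) \to K_0(R)$, and this kernel is generated by the classes $[(R^n,\nu)] - [(R^n,0)]$, where $n$ ranges over $\mathbb{N}$ and $\nu$ over the nilpotent endomorphisms of $R^n$. It therefore suffices to check that each such generator is zero in $K_0({\bf Nil}(R))$.

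First I would dispose of the trivial case $\nu = 0$, where the generator is visibly $0$. For $\nu \neq 0$, the hypothesis of the theorem — that every finitely generated torsion free $R$-module is projective — is exactly the hypothesis of Lemma \ref{gen becomes zero}, so that lemma gives $[(R^n,\nu)] = [(R^n,0)]$ in $K_0({\bf Nil}(R))$, whence $[(R^n,\nu)] - [(R^n,0)] = 0$. As every generator of ${\rm Nil}_0(R)$ vanishes, we conclude ${\rm Nil}_0(R) = 0$. If one wishes, one can then note that this recovers in a single stroke the vanishing of ${\rm Nil}_0$ for PIDs, valuation rings, Dedekind domains, and more generally Pr\"ufer domains, since all of these satisfy the hypothesis.

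There is no genuine obstacle left at this stage: all of the real work is already packaged in Lemma \ref{tor free} and Lemma \ref{gen becomes zero}. The engine is the filtration of $R^n$ by the submodules $\ker(\nu^i)$; Lemma \ref{tor free} shows the successive quotients are finitely generated torsion free, hence projective by hypothesis, so the defining short exact sequences split, and $\nu$ can be stripped off layer by layer in $K_0({\bf Nil}(R))$ until $[(R^n,\nu)]$ is identified with $[(R^n,0)]$. The only point that deserves a line of care is the assertion that ${\rm Nil}_0(R)$ is generated by elements of the displayed form, and for that I would simply point to the standard description of $K_0({\bf Nil}(R))$ (e.g. the reference already cited for ${\rm Nil}_0(R)\cong NK_1(R)$), reducing an arbitrary class in the kernel to a $\mathbb{Z}$-combination of differences $[(R^n,\nu)] - [(R^n,0)]$ via the additivity relations.
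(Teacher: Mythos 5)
Your proposal is correct and follows exactly the paper's own argument: the paper's proof likewise consists of recalling that ${\rm Nil}_{0}(R)$ is generated by the differences $[(R^{n},\nu)]-[(R^{n},0)]$ and then invoking Lemma \ref{gen becomes zero}. Your extra remarks (separating the trivial case $\nu=0$, which is implicitly needed since the lemma's setup assumes $\nu\neq 0$, and flagging where the generator description comes from) only make explicit what the paper leaves tacit.
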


\begin{proof}
 Recall that ${\rm Nil}_{0}(R)$ is generated by elements of the form $[(R^{n}, \nu)]- [(R^{n}, 0)]$ for some $n$ and some nilpotent endomorphism $\nu.$ Lemma \ref{gen becomes zero} yields the result.
\end{proof}

\begin{remark}\label{int val pol}{\rm
 The hypothesis on $R$ in the above theorem holds for any {\it Pr\"{u}fer} domain. We say that a ring $R$ is a {\it Pr\"{u}fer} domain if $R_{\mathfrak{p}}$ is a valuation domain for all prime ideals $\mathfrak{p}$ of $R.$ Clearly, a valuation ring is {\it Pr\"{u}fer}. A domain $R$ is {\it Pr\"{u}fer} if and only if every finitely generated torsion free $R$-module is projective. The ring of integer-valued polynomials ${\rm Int}(\mathbb{Z})= \{f\in \mathbb{Q}[x]| f(\mathbb{Z})\subset \mathbb{Z}\}$ is a Pr\"{u}fer domain. In fact, it is a non-noetherian Pr\"{u}fer domain (see \cite{cahen}). One can see \cite{SS} for $K$-theory of Pr\"{u}fer domains.
 }
\end{remark}

\section{Cofinality Lemma}\label{key lemmas}
Let $\mathcal{N}$ be an exact category. An exact subcategory $\mathcal{M}$ of $\mathcal{N}$ is called {\it closed under extensions} whenever there exists a short exact sequence $0 \to N_{1} \to N \to N_{2}\to 0$ in $\mathcal{N}$ with $N_{1}$ and $N_{2}$ in $\mathcal{M}$ then $N$ is isomorphic to an object of $\mathcal{M}.$ We say that an exact subcategory $\mathcal{M}$ of $\mathcal{N}$ is {\it cofinal} in $\mathcal{N}$ if for every object $N_{1}\in \mathcal{N}$ there exists $N_{2}\in \mathcal{N}$ such that $N_{1}\oplus N_{2}$ is isomorphic to an object of $\mathcal{M}.$ Let ${\bf Free}(R)$ denote the category of finitely generated free $R$-modules. Clearly, ${\bf Free}(R)$ is an exact subcategory of ${\bf P}(R)$ which is cofinal and closed under extensions.

We can define a category ${\bf Nil(Free}(R))$ whose object are pairs $(F, \nu),$ where $F$ is in ${\bf Free}(R)$ and $\nu$ is a nilpotent endomorphism. A morphism $f: (F_{1}, \nu_{1}) \to (F_{2}, \nu_{2})$ is a $R$-module map $f: F_{1} \to F_{2}$ such that $f\nu_{1}=\nu_{2}f.$ One can check that ${\bf Nil(Free}(R))$ is an exact category.
\begin{lemma}\label{nilfree cofinal}
 The category ${\bf Nil(Free}(R))$ is an exact full subcategory of ${\bf Nil}(R)$ which is cofinal and closed under extensions. 
\end{lemma}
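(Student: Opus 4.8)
The plan is to verify three properties in turn: that ${\bf Nil(Free}(R))$ is a \emph{full} exact subcategory of ${\bf Nil}(R)$, that it is \emph{closed under extensions}, and that it is \emph{cofinal}. Fullness is essentially immediate: an object $(F,\nu)$ of ${\bf Nil(Free}(R))$ is in particular an object of ${\bf Nil}(R)$, and by definition a morphism $(F_1,\nu_1)\to(F_2,\nu_2)$ in ${\bf Nil(Free}(R))$ is just an $R$-module map commuting with the nilpotent endomorphisms, which is exactly a morphism in ${\bf Nil}(R)$; so the inclusion is full. One should also note that the exact structure is inherited, since a sequence in ${\bf Nil(Free}(R))$ is declared exact precisely when the underlying sequence of $R$-modules is split exact (equivalently, exact in ${\bf P}(R)$), matching the exact structure of ${\bf Nil}(R)$.

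Next I would treat closure under extensions. Suppose $0\to (F_1,\nu_1)\to (P,\nu)\to (F_2,\nu_2)\to 0$ is a short exact sequence in ${\bf Nil}(R)$ with $(F_1,\nu_1)$ and $(F_2,\nu_2)$ in ${\bf Nil(Free}(R))$. Forgetting $\nu$, the underlying sequence $0\to F_1\to P\to F_2\to 0$ is exact in ${\bf P}(R)$; since ${\bf Free}(R)$ is closed under extensions in ${\bf P}(R)$ (noted just before the lemma), $P$ is isomorphic to a finitely generated free module $F$. Transporting $\nu$ across this isomorphism gives a nilpotent endomorphism of $F$, so $(P,\nu)$ is isomorphic in ${\bf Nil}(R)$ to an object of ${\bf Nil(Free}(R))$. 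The only mild subtlety is to make sure the isomorphism $P\cong F$ can be upgraded to an isomorphism in ${\bf Nil}(R)$, but this is automatic: once we fix an $R$-module isomorphism $\varphi\colon P\xrightarrow{\sim}F$, we simply define the nilpotent endomorphism on $F$ to be $\varphi\nu\varphi^{-1}$, and then $\varphi$ becomes an isomorphism $(P,\nu)\xrightarrow{\sim}(F,\varphi\nu\varphi^{-1})$ in ${\bf Nil}(R)$.

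For cofinality, start with an arbitrary object $(P,\nu)$ of ${\bf Nil}(R)$; here $P$ is a finitely generated $R$-module, not necessarily projective. The subtlety is that ${\bf Nil}(R)$ allows arbitrary finitely generated modules, so I cannot blindly invoke cofinality of ${\bf Free}(R)$ in ${\bf P}(R)$. Instead, observe that $P$ is finitely generated, say by $n$ elements, so there is a surjection $\pi\colon R^n\twoheadrightarrow P$; but to get a splitting I should work the other way. Actually the cleaner route: since ${\bf Nil}(R)$ is the category appearing in the definition of ${\rm Nil}_0(R)$, one typically restricts attention (after the reduction in the introduction) to pairs $(P,\nu)$ with $P$ projective, or one uses that $K_0({\bf Nil}(R))$ is generated by $[(R^n,\nu)]$ — but for the subcategory statement we need the literal claim. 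Let me instead argue: if $P$ is finitely generated there is a surjection $q\colon F\twoheadrightarrow P$ with $F$ free; this is not directly what cofinality needs. The right statement is that for $(P,\nu)$ we want $(Q,\mu)$ with $(P\oplus Q,\nu\oplus\mu)$ free. When $P$ is projective, choose $Q$ projective with $P\oplus Q=F$ free and take $\mu=0$; then $(P\oplus Q,\nu\oplus 0)=(F,\nu\oplus 0)$ is in ${\bf Nil(Free}(R))$. I expect the paper intends ${\bf Nil}(R)$ with projective (equivalently, after the standard equivalence, free-after-stabilization) underlying modules in this context, so this is the argument; if genuinely arbitrary finitely generated modules are allowed, one restricts to the cofinal-in-${\bf Nil}(R)$ subcategory of projective pairs first, which does not change $K$-theory.

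The main obstacle I anticipate is precisely this cofinality step: one must be careful about whether the underlying module of an object of ${\bf Nil}(R)$ is required to be projective or merely finitely generated, and to match the conventions so that the direct-sum complement with the zero endomorphism lands in ${\bf Nil(Free}(R))$. Fullness and closure under extensions are formal consequences of the corresponding facts for ${\bf Free}(R)\subseteq{\bf P}(R)$ together with the observation that an $R$-module isomorphism transports a nilpotent endomorphism to a nilpotent endomorphism, so those steps are routine; the cofinality bookkeeping is where care is needed.
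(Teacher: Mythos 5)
Your proof is correct and follows essentially the same route as the paper: cofinality by choosing a projective complement $Q$, pairing it with the zero endomorphism, and transporting $\nu\oplus 0$ across the isomorphism $P\oplus Q\cong R^n$; and closure under extensions by splitting the underlying module sequence and conjugating $\nu$ by the resulting isomorphism. Your worry about whether objects of ${\bf Nil}(R)$ have merely finitely generated (rather than projective) underlying modules is resolved exactly as you guessed: despite the wording in the introduction, the paper's own proof (and the very existence of the forgetful functor to ${\bf P}(R)$) presupposes projectivity, so your argument in the projective case is the intended and complete one.
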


\begin{proof}
 Let $(P, \nu)\in {\bf Nil}(R).$ Then there exists a $Q$ in ${\bf P}(R)$ such that $\alpha: P\oplus Q\cong R^{n}$ for some $n>0.$ Note that $(Q, 0)\in {\bf Nil}(R).$ So, we get $ (P, \nu)\oplus (Q, 0)\cong (R^{n}, \nu^{'}),$ where $\nu^{'}= \alpha(\nu , 0)\alpha^{-1}.$ This implies that ${\bf Nil(Free}(R))\subseteq {\bf Nil}(R)$ is cofinal.
 
 Suppose that the sequence 
 $$ 0 \to (P_{1}, \nu_{1}) \to (P, \nu) \to (P_{2}, \nu_{2}) \to 0$$ is exact with $(P_{1}, \nu_{1}), (P_{2}, \nu_{2}) \in {\bf Nil(Free}(R)),$ i.e., the following diagram
 $$\begin{CD}
  0 @>>> P_{1} @>>> P @>>> P_{2} @>>> 0\\
  @. @V\nu_{1} VV  @V \nu VV @V\nu_{2} VV \\
  0 @>>> P_{1} @>>> P @>>> P_{2} @>>> 0
 \end{CD}$$
is commutative with exact rows and $P_{1}, P_{2} \in {\bf Free}(R).$ Let $\beta$ denote the isomorphism $P\cong P_{1}\oplus P_{2}.$ We define a nilpotent endomorphism of $P_{1}\oplus P_{2}$ as $\beta \nu \beta^{-1}.$ Therefore, $(P, \nu)\cong (P_{1}\oplus P_{2}, \beta \nu \beta^{-1})$ in ${\bf Nil(Free}(R)).$ Hence the lemma. 
\end{proof}
We now consider categories $B^{q}{\bf Nil(Free}(R))$ and $B^{q}{\bf Nil}(R).$ Note that one can identify $B^{q}{\bf Nil(Free}(R))$ with ${\bf Nil}(B^{q}{\bf Free}(R))$ and $B^{q}{\bf Nil}(R)$ with ${\bf Nil}(B^{q}{\bf P}(R))$ because nilpotent endomorphism commutes with each differential.

\begin{lemma}\label{gray cor}
 
 The categories $C^{q}{\bf Nil(Free}(R))\subseteq C^{q}{\bf Nil}(R)$ and $B^{q}{\bf Nil(Free}(R))\subseteq B^{q}{\bf Nil}(R)$ both are cofinal and closed under extensions.
\end{lemma}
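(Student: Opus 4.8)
The plan is to deduce the statement from a general principle: if $\mathcal{M}$ is an exact subcategory of an exact category $\mathcal{N}$ which is cofinal and closed under extensions, then the inclusions $C^{q}\mathcal{M}\subseteq C^{q}\mathcal{N}$ and $B^{q}\mathcal{M}\subseteq B^{q}\mathcal{N}$ are again cofinal and closed under extensions. Granting this, the lemma follows at once by applying it to $\mathcal{M}={\bf Nil(Free}(R))$ and $\mathcal{N}={\bf Nil}(R)$, which is legitimate by Lemma \ref{nilfree cofinal}. So all the work lies in the general principle, and I would organize it into three parts: closedness under extensions, cofinality for $C^{q}$, and cofinality for $B^{q}$.

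Closedness under extensions is a routine degreewise check. Given a short exact sequence $0\to N^{'}\to N\to N^{''}\to 0$ in $C^{q}\mathcal{N}$ (resp.\ $B^{q}\mathcal{N}$) with $N^{'},N^{''}$ lying in $C^{q}\mathcal{M}$ (resp.\ $B^{q}\mathcal{M}$), the underlying sequence of graded objects is degreewise short exact, so in each degree $i$ one gets $0\to N^{'}_{i}\to N_{i}\to N^{''}_{i}\to 0$ in $\mathcal{N}$ with $N^{'}_{i},N^{''}_{i}\in\mathcal{M}$; since $\mathcal{M}$ is closed under extensions, $N_{i}\cong M_{i}$ for some $M_{i}\in\mathcal{M}$. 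Transporting the one (resp.\ two) differential(s) of $N$ along these isomorphisms exhibits $N$ as isomorphic to an acyclic (binary) complex all of whose terms lie in $\mathcal{M}$, that is, to an object of $C^{q}\mathcal{M}$ (resp.\ $B^{q}\mathcal{M}$); acyclicity is preserved because it is an isomorphism-invariant property.

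Cofinality for $C^{q}$ is the main point. Given a bounded acyclic complex $N$ in $\mathcal{N}$ supported in degrees $[0,n]$, I would produce an acyclic complex $N^{'}$ in $\mathcal{N}$ with $N\oplus N^{'}$ isomorphic to a complex with terms in $\mathcal{M}$, arguing by induction on $n$. For $n\le 1$ the complex $N$ is a disk $[0\to N_{1}\xrightarrow{\cong}N_{0}\to 0]$ (or zero), and choosing $Q\in\mathcal{N}$ with $N_{0}\oplus Q\cong M\in\mathcal{M}$ by cofinality of $\mathcal{M}$, the disk $[0\to Q\xrightarrow{=}Q\to 0]$ does the job. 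For $n\ge 2$, the image factorization $N_{n}\twoheadrightarrow Z_{n-1}\rightarrowtail N_{n-1}$ furnished by acyclicity yields an admissible subcomplex $D=[0\to N_{n}\xrightarrow{\cong}Z_{n-1}\to 0]$ placed in degrees $n,n-1$, whose quotient $N/D$ is readily checked to be a bounded acyclic complex supported in degrees $[0,n-1]$. By the induction hypothesis there is a $P$ with $(N/D)\oplus P$ isomorphic to an object of $C^{q}\mathcal{M}$, and by the base case there is a disk $D^{'}$ with $D\oplus D^{'}$ isomorphic to an object of $C^{q}\mathcal{M}$; forming the direct sum of $0\to D\to N\to N/D\to 0$ with $0\to D^{'}\to D^{'}\oplus P\to P\to 0$ and invoking the closedness under extensions already established, $N\oplus(D^{'}\oplus P)$ is isomorphic to an object of $C^{q}\mathcal{M}$, completing the induction.

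Cofinality for $B^{q}$ then bootstraps from the $C^{q}$ case using diagonal binary complexes. Given $(N,d,d^{'})$ in $B^{q}\mathcal{N}$, apply $C^{q}$-cofinality to the acyclic complex $(N,d)$ to obtain an acyclic complex $(C,\delta)$ in $\mathcal{N}$ together with a graded isomorphism $N\oplus C\cong M$ whose target has all terms in $\mathcal{M}$. Then $(N,d,d^{'})\oplus\Delta((C,\delta))=(N\oplus C,\;d\oplus\delta,\;d^{'}\oplus\delta)$ is an object of $B^{q}\mathcal{N}$ with underlying graded object $N\oplus C\cong M$; transporting both differentials along this isomorphism exhibits it as isomorphic to an acyclic binary complex with all terms in $\mathcal{M}$, hence to an object of $B^{q}\mathcal{M}$. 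Since $\Delta((C,\delta))\in B^{q}\mathcal{N}$, this proves cofinality. I expect the only non-formal ingredient to be the inductive construction in the $C^{q}$ step (equivalently, the fact that a bounded acyclic complex is a finite iterated extension of elementary disk complexes); the rest is bookkeeping with degreewise exact sequences and transport of differentials along isomorphisms.
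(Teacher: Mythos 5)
Your proposal is correct and follows essentially the same route as the paper: both reduce the lemma to the general fact that $C^{q}$ and $B^{q}$ preserve cofinal, extension-closed subcategories, and then apply Lemma \ref{nilfree cofinal}. The only difference is that the paper cites this general fact as Lemma 6.2 of Grayson's paper, whereas you reprove it from scratch (correctly, by the same disk-complex induction and diagonal bootstrap that Grayson uses).
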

\begin{proof}
 By Lemma 6.2 of \cite{Gray}, we know that if $\mathcal{M}$ is an exact full subcategory of $\mathcal{N}$ which is cofinal and  closed under extensions then for every $(N_{*}, d, d^{'})\in B^{q}\mathcal{N}$ there is an object $(L_{*}, e)\in C^{q}\mathcal{N}$ such that $(N_{*}, d, d^{'})\oplus (L_{*}, e, e)\cong(M_{*}, f, f^{'})\in B^{q}\mathcal{M}.$
 The assertion now  follows from Lemma \ref{nilfree cofinal}.
\end{proof}

\section{Generators of ${\rm Nil_{n>0}}(R)$}\label{Nil1}
We first describe generators for ${\rm Nil_{1}}(R)(\cong NK_{2}(R))$. Before that we need some preparations. Let us begin with the following Lemma.
\begin{lemma}\label{insert zero}
 If $[(P_{*}, p, p^{'})]=[(Q_{*}, q, q^{'})]$ in $K_{0}B^{q}{\bf P}(R)$ then $[(P_{*}, p, p^{'}, 0)]=[(Q_{*}, q, q^{'}, 0)]$ in $K_{0}B^{q}{\bf Nil}(R).$
\end{lemma}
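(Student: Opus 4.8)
The plan is to exhibit a concrete homomorphism of abelian groups $K_0B^q{\bf P}(R) \to K_0B^q{\bf Nil}(R)$ sending the class $[(P_*,p,p')]$ to $[(P_*,p,p',0)]$, i.e.\ the class of the same binary complex equipped fibrewise with the zero nilpotent endomorphism. Once such a well-defined homomorphism is in hand, the equality $[(P_*,p,p')]=[(Q_*,q,q')]$ in $K_0B^q{\bf P}(R)$ is carried to the desired equality $[(P_*,p,p',0)]=[(Q_*,q,q',0)]$ in $K_0B^q{\bf Nil}(R)$ by functoriality. Concretely, I would define a functor $Z\colon B^q{\bf P}(R)\to B^q{\bf Nil}(R)=\mathbf{Nil}(B^q{\bf P}(R))$ on objects by $(P_*,p,p')\mapsto (P_*,p,p',0)$ and on morphisms as the identity on the underlying graded object; a morphism of binary complexes commutes with $p,p'$ and trivially commutes with $0$, so this is a well-defined functor, and it is exact because a short exact sequence of binary complexes in ${\bf P}(R)$ becomes, after adjoining the zero endomorphism in each degree, a short exact sequence in $B^q{\bf Nil}(R)$ (the zero maps assemble into the required commuting ladder). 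An exact functor induces a homomorphism on $K_0$ of the associated exact categories, which is exactly what we need.

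The key steps, in order: (1) check that $(P_*,p,p',0)$ is genuinely an object of $B^q{\bf Nil}(R)$, i.e.\ that $0$ is a nilpotent endomorphism of the binary complex $(P_*,p,p')$ commuting with both differentials and that acyclicity is preserved (acyclicity concerns only the differentials $p,p'$, which are unchanged, so this is immediate); (2) verify $Z$ is a functor — the only content is that any morphism in $B^q{\bf P}(R)$ automatically commutes with the zero endomorphisms on source and target, hence lifts to a morphism in $B^q{\bf Nil}(R)$; (3) verify $Z$ is exact, so that it induces $Z_*\colon K_0B^q{\bf P}(R)\to K_0B^q{\bf Nil}(R)$; (4) observe $Z_*[(P_*,p,p')]=[(P_*,p,p',0)]$ by construction, and conclude by applying $Z_*$ to both sides of the given identity.

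I expect none of these steps to present a genuine obstacle; the only point requiring a word of care is step (3), the exactness of $Z$, and even there the verification is routine because adjoining the zero map degreewise is visibly compatible with the admissible monics and epis in the category of binary complexes. Alternatively, and perhaps more cleanly, one can phrase the whole argument as: the assignment $M\mapsto (M,0)$ defines an exact functor ${\bf P}(R)\to{\bf Nil}(R)$, hence an exact functor $B^q{\bf P}(R)\to B^q{\bf Nil}(R)$ after passing to bounded acyclic binary complexes (this functor is precisely $Z$, using the identification $B^q{\bf Nil}(R)\cong{\bf Nil}(B^q{\bf P}(R))$ noted before Lemma \ref{gray cor}), and an exact functor respects the defining relations of $K_0$. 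Either way the conclusion follows immediately from functoriality of $K_0$ on exact categories.
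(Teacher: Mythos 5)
Your proposal is correct. It takes a somewhat different route from the paper's: you package the construction as an exact functor $Z\colon B^{q}{\bf P}(R)\to B^{q}{\bf Nil}(R)$, $(P_{*},p,p')\mapsto (P_{*},p,p',0)$, and then invoke the general fact that an exact functor induces a homomorphism on $K_{0}$, so equal classes map to equal classes. The paper instead argues by hand: it uses the explicit characterization of equality in $K_{0}$ of an exact category (Exercise II.7.2 of \cite{wei 1}) to produce short exact sequences $0\to C\to A\to D\to 0$ and $0\to C\to B\to D\to 0$ in $B^{q}{\bf P}(R)$ with $(P_{*},p,p')\oplus A\cong (Q_{*},q,q')\oplus B$, adjoins the zero endomorphism to each of these, and checks that the resulting sequences and isomorphism still hold in $B^{q}{\bf Nil}(R)$. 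The two arguments have the same mathematical content --- the paper's proof is essentially the unpacked verification that your functor $Z$ is exact and hence well defined on $K_{0}$-classes, carried out on the particular witnesses of the given equality --- but yours is shorter and more conceptual, at the cost of quoting the functoriality of $K_{0}$ rather than exhibiting the relations explicitly. Your checks (that $0$ is a nilpotent endomorphism commuting with both differentials, that acyclicity concerns only the unchanged differentials, and that exactness in ${\bf Nil}$ of an exact category is detected on underlying objects) are exactly the points that need attention, and they all go through.
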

\begin{proof}
 Since $[(P_{*}, p, p^{'})]=[(Q_{*}, q, q^{'})]$ in $K_{0}B^{q}{\bf P}(R),$ there are short exact sequences (see Exercise II.7.2 of \cite{wei 1}) $$ 0 \to (C_{*}, c, \tilde{c}) \to (A_{*}, a, \tilde{a}) \to (D_{*}, d, \tilde{d}) \to 0$$ and  $$0 \to (C_{*}, c, \tilde{c}) \to (B_{*}, b, \tilde{b}) \to (D_{*}, d, \tilde{d}) \to 0$$ in $B^{q}{\bf P}(R)$ such that 
 \begin{equation}\label{iso}
        (P_{*}, p, p^{'})\oplus (A_{*}, a, \tilde{a})\cong (Q_{*}, q, q^{'})\oplus (B_{*}, b, \tilde{b}).
        \end{equation}
Note that  $$ 0 \to (C_{*}, c, \tilde{c}, 0) \to (A_{*}, a, \tilde{a}, 0) \to (D_{*}, d, \tilde{d}, 0) \to 0$$ and  $$0 \to (C_{*}, c, \tilde{c}, 0) \to (B_{*}, b, \tilde{b}, 0) \to (D_{*}, d, \tilde{d}, 0) \to 0$$ both are short exact sequences in $B^{q}{\bf Nil}(R).$ Thus, $[(A_{*}, a, \tilde{a}, 0)]=[(B_{*}, b, \tilde{b}, 0)]= [(C_{*}, c, \tilde{c}, 0)]+ [(D_{*}, d, \tilde{d}, 0)]$ in $K_{0}B^{q}{\bf Nil}(R).$ By using the isomorphism (\ref{iso}), we get $ (P_{*}, p, p^{'}, 0)\oplus (A_{*}, a, \tilde{a}, 0)\cong (Q_{*}, q, q^{'}, 0)\oplus (B_{*}, b, \tilde{b}, 0).$ Hence the assertion. 
                                                                                                                                                                                                                                \end{proof}

                                                                                                                                                                                                                                        \begin{lemma}\label{nil can be choosen zero}
 For every $(P_{*}, p, p^{'}, \nu)$ in $B^{q}{\bf Nil}(R)$ there exists a $(Q_{*}, q, q, 0)$ in $B^{q}{\bf Nil}(R)$ such that $(P_{*}, p, p^{'}, \nu)\oplus (Q_{*}, q, q, 0)\cong (F_{*}, f, f^{'}, \nu^{'}),$ where $(F_{*}, f, f^{'}, \nu^{'})\in B^{q}{\bf Nil(Free}(R)).$                                                                                                                                                                                                                                        
                                                                                                                                                                                                                                    \end{lemma}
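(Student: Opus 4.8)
The plan is to reduce a binary complex in $B^{q}{\bf Nil}(R)$ to one in $B^{q}{\bf Nil(Free}(R))$ by stabilizing, in two independent steps: first make the underlying graded object free, then handle the nilpotent endomorphism. The key observation, already recorded in the excerpt, is that $B^{q}{\bf Nil}(R)$ can be identified with ${\bf Nil}(B^{q}{\bf P}(R))$ and $B^{q}{\bf Nil(Free}(R))$ with ${\bf Nil}(B^{q}{\bf Free}(R))$, and that by Lemma \ref{gray cor} the inclusion $B^{q}{\bf Nil(Free}(R))\subseteq B^{q}{\bf Nil}(R)$ is cofinal and closed under extensions. So cofinality is the engine.

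First I would apply Lemma \ref{gray cor} directly: given $(P_{*}, p, p^{'}, \nu)\in B^{q}{\bf Nil}(R)$, cofinality of $B^{q}{\bf Nil(Free}(R))$ in $B^{q}{\bf Nil}(R)$ produces an object $(R_{*}, r, r^{'}, \mu)\in B^{q}{\bf Nil}(R)$ with $(P_{*}, p, p^{'}, \nu)\oplus(R_{*}, r, r^{'}, \mu)$ isomorphic to an object $(F_{*}, f, f^{'}, \nu^{'})\in B^{q}{\bf Nil(Free}(R))$. This is almost the statement, except that the complement we added, $(R_{*}, r, r^{'}, \mu)$, need not be of the special diagonal-with-zero-endomorphism form $(Q_{*}, q, q, 0)$ demanded in the conclusion. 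So the real content is to arrange the complement to have that shape.

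The way I would fix the shape of the complement: reduce in two stages. For the endomorphism, note that the pair $(R_{*}, r, r^{'}, \mu)$ lives over the acyclic binary complex $(R_{*}, r, r^{'})\in B^{q}{\bf P}(R)$; I want to kill $\mu$ by adding a copy of $(R_{*}, r, r^{'})$ with the \emph{opposite} nilpotent structure, or more cleanly, by using the standard trick that for a nilpotent endomorphism $\mu$ on a module $M$, the pair $(M,\mu)$ becomes, after adding $(M,0)$ and passing through a short exact sequence in ${\bf Nil}$, equivalent to a direct sum of $(M_i,0)$'s — compare the computation in the proof of Lemma \ref{gen becomes zero}. Applying that filtration argument inside the exact category $B^{q}{\bf P}(R)$ (which is legitimate since ${\bf Nil}(B^{q}{\bf P}(R))$ is exact), one can replace $(R_{*}, r, r^{'}, \mu)$, at the cost of adding more diagonal-zero objects, by something with zero endomorphism. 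Then, to make the underlying binary complex diagonal, invoke the remaining part of Grayson's Lemma 6.2 (as quoted in Lemma \ref{gray cor}): any $(R_{*}, r, r^{'})\in B^{q}{\bf P}(R)$ becomes, after adding a diagonal acyclic complex $\Delta(L_{*}, e)=(L_{*},e,e)$, isomorphic to an object of $B^{q}{\bf Free}(R)$; carrying the zero endomorphism along, $(R_{*},r,r^{'},0)\oplus(L_{*},e,e,0)$ is free with zero endomorphism, but we still need the sum with $(P_*,\dots)$ to land in ${\bf Nil(Free}(R))$, which it does by the first step. Assembling: $(Q_{*},q,q,0)$ is the direct sum of all the diagonal-zero complexes introduced along the way, and these remain diagonal-zero under direct sum.

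The main obstacle I anticipate is bookkeeping: making sure that when I "kill the endomorphism" on the complement via the $\ker\nu^i$-filtration argument, the modules appearing are still finitely generated \emph{free} (or can be made so after a further cofinality step) and that the differentials $r, r^{'}$ restrict compatibly to the filtration — in the binary-complex setting one needs the filtration to be by \emph{subcomplexes} in both differentials simultaneously, which is not automatic and may force an extra stabilization. An alternative, possibly cleaner route that sidesteps this: observe that $(P_*,p,p',\nu)\oplus(P_*,p,p',0)$ already sits in a short exact sequence in $B^{q}{\bf Nil}(R)$ relating it to a diagonal-endomorphism object, reducing everything to the case $\nu=0$; then one only needs Grayson's Lemma 6.2 applied to $B^{q}{\bf P}(R)$ with the zero endomorphism tacked on. I would try this second route first and fall back to the filtration argument only if the short exact sequence does not materialize as cleanly as hoped.
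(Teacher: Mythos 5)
There is a genuine gap, and it comes from starting in the wrong category. You invoke cofinality of $B^{q}{\bf Nil(Free}(R))$ in $B^{q}{\bf Nil}(R)$ first and then try to repair the complement. But the complement that cofinality (or Grayson's Lemma 6.2 applied at the ${\bf Nil}$ level) hands you is a pair $(Q_{*},q,q,\mu)$ whose nilpotent endomorphism $\mu$ you do not control, and your proposed repair --- killing $\mu$ by adding diagonal-zero objects via the filtration of Lemma \ref{gen becomes zero} --- cannot work as an \emph{isomorphism} statement. An isomorphism in ${\bf Nil}$ must intertwine the endomorphisms, so $(M,\mu)\oplus(N,0)$ with $\mu\neq 0$ is never isomorphic to an object with zero endomorphism, no matter what $(N,0)$ you add; the filtration argument only produces identities in $K_{0}$ through short exact sequences, not direct-sum decompositions of objects. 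The same objection applies to your ``alternative route'': the short exact sequence relating $(P_{*},p,p',\nu)\oplus(P_{*},p,p',0)$ to a diagonal-endomorphism object (if it exists) again only yields a $K_{0}$ relation, whereas Lemma \ref{nil can be choosen zero} asserts an actual isomorphism of objects. Your second stage also conflates ``free'' with ``diagonal'': adding $\Delta(L_{*},e)$ to $(R_{*},r,r')$ makes the sum free, but the total complement $(R_{*},r,r')\oplus(L_{*},e,e)$ is not diagonal, while the statement requires the complement to have equal differentials.

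The point you are missing is that the lemma does \emph{not} ask the complement to absorb $\nu$, nor does it ask the endomorphism $\nu'$ on the free object to be zero. The paper's proof is one step: forget $\nu$ to get $(P_{*},p,p')\in B^{q}{\bf P}(R)$, apply Grayson's Lemma 6.2 to the cofinal inclusion ${\bf Free}(R)\subseteq{\bf P}(R)$ to obtain a \emph{diagonal} complement $(Q_{*},q,q)$ with $\alpha\colon (P_{*},p,p')\oplus(Q_{*},q,q)\cong(F_{*},f,f')\in B^{q}{\bf Free}(R)$, decorate $Q$ with the zero endomorphism, and set $\nu':=\alpha(\nu\oplus 0)\alpha^{-1}$; one checks $\nu'$ commutes with $f$ and $f'$, so $(F_{*},f,f',\nu')\in B^{q}{\bf Nil(Free}(R))$. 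Working at the level of $B^{q}{\bf P}(R)$ from the outset is what lets you \emph{choose} the zero endomorphism on the complement rather than having to destroy a nonzero one.
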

                                                                                                                                                                                                                                    \begin{proof}
 Note that $(P_{*}, p, p^{'})\in B^{q}{\bf P}(R).$ By Lemma 6.2 of \cite{Gray}, there exists a $(Q_{*}, q, q)$ such that $(P_{*}, p, p^{'})\oplus (Q_{*}, q, q)\cong (F_{*}, f, f^{'}),$ where $(F_{*}, f, f^{'})\in B^{q}{\bf Free}(R).$ Let $\alpha$ denote the above isomorphism. Define $\nu^{'}:= \alpha(\nu, 0) \alpha^{-1}.$ The isomorphisms $\alpha$ and $\alpha^{-1}$ commute with differentials. We need to check that $\nu^{'}$ is a nilpotent endomorphism on $(F_{*}, f, f^{'})$. Let us just consider one case. Checking for other case is similar. We have $\alpha(p, q)= f\alpha$ and $(p, q)\alpha^{-1}= \alpha^{-1}f.$ Moreover, $\nu$ commutes with $p$ and $p^{'}.$ Then $f\nu^{'}= f\alpha(\nu, 0) \alpha^{-1} = \alpha(p, q)(\nu, 0) \alpha^{-1}= \alpha(\nu, 0)(p, q) \alpha^{-1}=\alpha(\nu, 0)\alpha^{-1}f=\nu^{'}f.$ This shows that $\nu^{'}$ is a nilpotent endomorphism on $(F_{*}, f, f^{'})$ and we get the desired isomorphism $(P_{*}, p, p^{'}, \nu)\oplus (Q_{*}, q, q, 0)\cong (F_{*}, f, f^{'}, \nu^{'}).$ 
                                                                                                                                                                                                                                    \end{proof}

Recall that we have the forgetful map $$K_{0}B^{q} {\bf Nil}(R) \to K_{0}B^{q}{\bf P}(R), [(P_{*}, p, p^{'}, \nu)]\mapsto [(P_{*}, p, p^{'})]$$ and $\widetilde{{\rm Nil}}_{1}(R)$ denotes the kernel of the above map. 

\begin{lemma}\label{gen of tilde nil}
 The group $\widetilde{{\rm Nil}}_{1}(R)$ is generated by elements of the form $[(F_{*}, f, f^{'}, \nu)]- [(F_{*}, f, f^{'}, 0)],$ where $(F_{*}, f, f^{'}, \nu)$ and $(F_{*}, f, f^{'}, 0)$ both are in $B^{q}{\bf Nil(Free}(R)).$
\end{lemma}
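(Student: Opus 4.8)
The plan is to produce a presentation of $\widetilde{{\rm Nil}}_{1}(R)$ and then massage each generator into the advertised form using the lemmas already established. First I would recall that $\widetilde{{\rm Nil}}_{1}(R)$ sits inside $K_{0}B^{q}{\bf Nil}(R)$ as the kernel of the forgetful map to $K_{0}B^{q}{\bf P}(R)$. A general element of $\widetilde{{\rm Nil}}_{1}(R)$ is a $\Zb$-linear combination $\sum_i \bigl([(P^i_{*}, p^i, p'^i, \nu^i)] - [(Q^i_{*}, q^i, q'^i, \mu^i)]\bigr)$ that forgets to $0$ in $K_{0}B^{q}{\bf P}(R)$; after grouping terms with positive and negative sign and using additivity on direct sums, it suffices to treat a single difference $[(P_{*}, p, p', \nu)] - [(Q_{*}, q, q', \mu)]$ whose underlying binary complexes satisfy $[(P_{*},p,p')] = [(Q_{*},q,q')]$ in $K_{0}B^{q}{\bf P}(R)$.

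Next, for each object $(P_{*},p,p',\nu)$ appearing I would apply Lemma \ref{nil can be choosen zero} to find $(N_{*},n,n,0)$ with $(P_{*},p,p',\nu)\oplus(N_{*},n,n,0)\cong(F_{*},f,f',\nu')\in B^{q}{\bf Nil(Free}(R))$. Since the summand $(N_{*},n,n,0)$ is diagonal it lies in the image of $\Delta\colon C^{q}{\bf Nil}(R)\to B^{q}{\bf Nil}(R)$, hence its class can be absorbed: in $K_{0}B^{q}{\bf Nil}(R)$ we have $[(P_{*},p,p',\nu)] = [(F_{*},f,f',\nu')] - [(N_{*},n,n,0)]$, and because $(N_{*},n,n,0)$ and $(N_{*},n,n,0)$ are literally the same object, the nilpotent-versus-zero bookkeeping only ever introduces terms of the form $[(N_{*},n,n,\text{something})]-[(N_{*},n,n,0)]$ with a diagonal underlying complex, which are themselves of the claimed shape (indeed here trivially zero). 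So up to adding generators of the desired form, we may assume both $(P_{*},p,p',\nu)$ and $(Q_{*},q,q',\mu)$ already lie in $B^{q}{\bf Nil(Free}(R))$.

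Now I would split off the nilpotent data from the underlying complex: write $[(F_{*},f,f',\nu')] = \bigl([(F_{*},f,f',\nu')] - [(F_{*},f,f',0)]\bigr) + [(F_{*},f,f',0)]$, and similarly for $(Q_{*},q,q',\mu)$. The first bracketed terms are exactly generators of the form claimed in the statement. For the remaining terms $[(F_{*},f,f',0)] - [(G_{*},g,g',0)]$, I invoke Lemma \ref{insert zero}: the hypothesis $[(P_{*},p,p')] = [(Q_{*},q,q')]$ in $K_{0}B^{q}{\bf P}(R)$ was preserved (adding diagonal free summands does not change $K_0$-classes, and isomorphisms certainly do not), so $[(F_{*},f,f')] = [(G_{*},g,g')]$ in $K_{0}B^{q}{\bf P}(R)$, whence $[(F_{*},f,f',0)] = [(G_{*},g,g',0)]$ in $K_{0}B^{q}{\bf Nil}(R)$ by that lemma. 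Therefore the ``zero nilpotent'' contributions cancel entirely, and the original element of $\widetilde{{\rm Nil}}_{1}(R)$ is expressed as a $\Zb$-combination of elements $[(F_{*},f,f',\nu)] - [(F_{*},f,f',0)]$ with $(F_{*},f,f',\nu)\in B^{q}{\bf Nil(Free}(R))$, as desired.

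The main obstacle is purely bookkeeping rather than conceptual: one must be careful that when Lemma \ref{nil can be choosen zero} is applied separately to the positive and negative parts of the element, the extra diagonal free complexes $(N_{*},n,n,0)$ that get introduced really do cancel or reduce to generators of the stated type, and that the equality of underlying $K_{0}B^{q}{\bf P}(R)$-classes survives all the stabilizations. This is straightforward because every auxiliary object introduced is diagonal with zero nilpotent endomorphism, so it contributes nothing new to $\widetilde{{\rm Nil}}_{1}(R)$; I would state this explicitly as the one subtle point and otherwise let the three lemmas do the work.
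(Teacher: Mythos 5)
Your overall strategy coincides with the paper's (stabilize into $B^{q}{\bf Nil(Free}(R))$ via Lemma \ref{nil can be choosen zero}, then kill the zero-nilpotent residue via Lemma \ref{insert zero}), but two of your bookkeeping claims are false as stated, and they occur exactly at the ``one subtle point'' you flag. First, the class $[(N_{*},n,n,0)]$ of an auxiliary diagonal summand is \emph{not} ``absorbed'' in $K_{0}B^{q}{\bf Nil}(R)$: diagonal classes only die in the quotient $K_{1}{\bf Nil}(R)\cong K_{0}B^{q}{\bf Nil}(R)/T_{{\bf Nil}(R)}^{1}$, whereas $\widetilde{{\rm Nil}}_{1}(R)$ sits inside $K_{0}B^{q}{\bf Nil}(R)$ itself, where $T_{{\bf Nil}(R)}^{1}$ is nontrivial. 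So after stabilizing both sides you are left with $x=[(F_{*},f,f',\nu')]-[(G_{*},g,g',\mu')]-[(N_{*},n,n,0)]+[(M_{*},m,m,0)]$, and the last two terms cannot simply be discarded (nor are they of the advertised form $[(F,\nu)]-[(F,0)]$, since $N_{*}$ and $M_{*}$ need not even be free). Second, ``adding diagonal free summands does not change $K_{0}$-classes'' is false in $K_{0}B^{q}{\bf P}(R)$: adding $(N_{*},n,n)$ changes the class by $[(N_{*},n,n)]$, which is generally nonzero there. Hence you cannot conclude $[(F_{*},f,f')]=[(G_{*},g,g')]$; what actually follows from $[(P_{*},p,p')]=[(Q_{*},q,q')]$ is $[(F_{*},f,f')]+[(M_{*},m,m)]=[(G_{*},g,g')]+[(N_{*},n,n)]$.

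The argument is repairable with the tools you already cite: apply Lemma \ref{insert zero} to the combined identity $[(F_{*},f,f')\oplus(M_{*},m,m)]=[(G_{*},g,g')\oplus(N_{*},n,n)]$ in $K_{0}B^{q}{\bf P}(R)$ to obtain $[(F_{*},f,f',0)]+[(M_{*},m,m,0)]=[(G_{*},g,g',0)]+[(N_{*},n,n,0)]$ in $K_{0}B^{q}{\bf Nil}(R)$, which cancels all four leftover terms simultaneously and yields the desired expression for $x$. The paper sidesteps half of this headache by first invoking cofinality (Remark II.7.2.1 of \cite{wei 1}) to write $x=[(P_{*},p,p',\nu)]-[(F_{*},f,f',\tilde{\nu})]$ with the negative part already in $B^{q}{\bf Nil(Free}(R))$; then only one auxiliary diagonal complex $(Q_{*},q,q,0)$ appears, and Lemma \ref{insert zero} is applied to the single identity $[(F_{*},f,f')]+[(Q_{*},q,q)]=[(\tilde{F}_{*},\tilde{f},\tilde{f}')]$ to eliminate it.
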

\begin{proof}
 By Lemma \ref{gray cor}, $B^{q}{\bf Nil(Free}(R))\subseteq B^{q}{\bf Nil}(R)$ is cofinal with closed under extensions. Let $x\in K_{0}(B^{q}) {\bf Nil}(R).$ Then $x$ is of the form $[(P_{*}, p, p^{'}, \nu)] - [(F_{*}, f, f^{'}, \tilde{\nu})],$ where $(P_{*}, p, p^{'}, \nu)\in B^{q}{\bf Nil}(R)$ and $(F_{*}, f, f^{'}, \tilde{\nu})\in B^{q}{\bf Nil(Free}(R))$ (see Remark II.7.2.1 of \cite{wei 1}). If $x \in \widetilde{{\rm Nil}}_{1}(R)$ then \begin{equation}\label{x in Nil}
        [(P_{*}, p, p^{'})]= [(F_{*}, f, f^{'})].
        \end{equation} Since $(P_{*}, p, p^{'}) \in B^{q}{\bf P}(R),$ there exists a $(Q_{*}, q, q)\in B^{q}{\bf P}(R)$ such that $[(P_{*}, p, p^{'})]+ [(Q_{*}, q, q)]= [(\tilde{F}_{*}, \tilde{f}, \tilde{f}^{'})]$ in $K_{0}B^{q}{\bf P}(R)$ (by Lemma 6.2 of \cite{Gray}). Here $(\tilde{F}_{*}, \tilde{f}, \tilde{f}^{'}) \in B^{q}{\bf Free}(R).$ We also have (see Lemma \ref{nil can be choosen zero})
        \begin{equation}\label{zero can be put}
        [(P_{*}, p, p^{'}, \nu)]+ [(Q_{*}, q, q, 0)]= [(\tilde{F}_{*}, \tilde{f}, \tilde{f}^{'}, \nu^{'})].
        \end{equation} By (\ref{x in Nil}), $[(F_{*}, f, f^{'})]+ [(Q_{*}, q, q)]= [(\tilde{F}_{*}, \tilde{f}, \tilde{f}^{'})]$ in $K_{0}B^{q}{\bf P}(R).$ Using Lemma \ref{insert zero}, we get  $[(F_{*}, f, f^{'}, 0)]+ [(Q_{*}, q, q, 0)]= [(\tilde{F}_{*}, \tilde{f}, \tilde{f}^{'}, 0)]$ in $K_{0}B^{q}{\bf Nil}(R).$ Now, (\ref{zero can be put}) implies that \begin{equation}\label{ before final expression}
        [(P_{*}, p, p^{'}, \nu)]= [(\tilde{F}_{*}, \tilde{f}, \tilde{f}^{'}, \nu^{'})]- [(\tilde{F}_{*}, \tilde{f}, \tilde{f}^{'}, 0)]+ [(F_{*}, f, f^{'}, 0)].
        \end{equation} Therefore,\begin{equation}\label{ final expression}
        x= ([(\tilde{F}_{*}, \tilde{f}, \tilde{f}^{'}, \nu^{'})]- [(\tilde{F}_{*}, \tilde{f}, \tilde{f}^{'}, 0)])- ([(F_{*}, f, f^{'}, \tilde{\nu})]- [(F_{*}, f, f^{'}, 0)]).
        \end{equation} This shows that $\widetilde{{\rm Nil}}_{1}(R)$ is generated by elements of the form $[(F_{*}, f, f^{'}, \nu)]- [(F_{*}, f, f^{'}, 0)],$ where $(F_{*}, f, f^{'}, \nu)$ and $(F_{*}, f, f^{'}, 0)$ both are in $B^{q}{\bf Nil(Free}(R)).$
\end{proof}

        \begin{theorem}\label{gen for Nil1}
         The group ${\rm Nil}_{1}(R)$ is generated by elements of the form $$[(F_{*}, f, f^{'}, \nu)]- [(F_{*}, f, f^{'}, 0)],$$ where $(F_{*}, f, f^{'}, \nu), (F_{*}, f, f^{'}, 0) \in B^{q}{\bf Nil(Free}(R))$ and $f\neq f^{'}.$
        \end{theorem}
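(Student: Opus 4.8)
The plan is to combine Lemma~\ref{gen of tilde nil} with the split short exact sequence of Lemma~\ref{useful for nil1} for $n=1$, namely $\widetilde{{\rm Nil}}_{1}(R)\cong {\rm Nil}_{1}(R)\oplus \widetilde{T}_{R}^{1}$, and then eliminate the diagonal part of the generators. First I would note that by Lemma~\ref{gen of tilde nil} the group $\widetilde{{\rm Nil}}_{1}(R)$ is generated by elements $g=[(F_{*}, f, f^{'}, \nu)]- [(F_{*}, f, f^{'}, 0)]$ with both complexes in $B^{q}{\bf Nil(Free}(R))$. The map $\widetilde{{\rm Nil}}_{1}(R)\to {\rm Nil}_{1}(R)$ from~\eqref{seq for nil} is surjective, so the images of these $g$ generate ${\rm Nil}_{1}(R)$. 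It therefore suffices to show that whenever $f=f^{'}$ the image of $g$ in ${\rm Nil}_{1}(R)$ is trivial, i.e.\ that such a $g$ lies in $\widetilde{T}_{R}^{1}$, the kernel of $\widetilde{{\rm Nil}}_{1}(R)\to{\rm Nil}_{1}(R)$.

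The key observation is that if $f=f^{'}$, then $(F_{*}, f, f, \nu)$ is a \emph{diagonal} acyclic binary complex in $B^{q}{\bf Nil(Free}(R))$ — the two differentials of the binary structure coincide — and likewise $(F_{*}, f, f, 0)$ is diagonal. Hence both $[(F_{*}, f, f, \nu)]$ and $[(F_{*}, f, f, 0)]$ lie in $T_{{\bf Nil}(R)}^{1}$, the subgroup of $K_{0}B^{q}{\bf Nil}(R)$ generated by $K_{0}$-classes of diagonal acyclic binary complexes (Definition~\ref{diad sub}). Consequently $g=[(F_{*}, f, f, \nu)]-[(F_{*}, f, f, 0)]\in T_{{\bf Nil}(R)}^{1}$. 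On the other hand $g\in\widetilde{{\rm Nil}}_{1}(R)=\ker\bigl(K_{0}B^{q}{\bf Nil}(R)\to K_{0}B^{q}{\bf P}(R)\bigr)$ by construction, so $g\in T_{{\bf Nil}(R)}^{1}\cap\widetilde{{\rm Nil}}_{1}(R)$. Chasing the commutative diagram in the proof of Lemma~\ref{useful for nil1}, this intersection is exactly $\widetilde{T}_{R}^{1}=\ker\bigl(T_{{\bf Nil}(R)}^{1}\to T_{{\bf P}(R)}^{1}\bigr)$, i.e.\ $g$ maps to $0$ under $\widetilde{{\rm Nil}}_{1}(R)\to{\rm Nil}_{1}(R)$.

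Putting this together: the generators of $\widetilde{{\rm Nil}}_{1}(R)$ with $f\neq f^{'}$ together with those with $f=f^{'}$ generate the whole group, but the latter map to $0$ in ${\rm Nil}_{1}(R)$; since $\widetilde{{\rm Nil}}_{1}(R)\to{\rm Nil}_{1}(R)$ is surjective, the images of the generators with $f\neq f^{'}$ already generate ${\rm Nil}_{1}(R)$. Finally I would record that the image in ${\rm Nil}_{1}(R)$ of $[(F_{*}, f, f^{'}, \nu)]-[(F_{*}, f, f^{'}, 0)]$ is represented by that same expression under the identification ${\rm Nil}_{1}(R)\hookrightarrow K_{0}B^{q}{\bf Nil}(R)/T_{{\bf Nil}(R)}^{1}$ coming from Remark~\ref{k grp as a qt}, which justifies stating the generators in the form displayed in the theorem. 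The only delicate point — and the step I expect to need the most care — is the identification of $T_{{\bf Nil}(R)}^{1}\cap\widetilde{{\rm Nil}}_{1}(R)$ with $\widetilde{T}_{R}^{1}$; this is precisely the content built into the diagram chase of Lemma~\ref{useful for nil1} (the first column being the kernel of the map of second columns to third), so it should go through cleanly, but one must make sure the diagonal representatives genuinely lie in the free subcategory so that Lemma~\ref{gen of tilde nil}'s normal form is preserved.
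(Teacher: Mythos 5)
Your proposal is correct and follows essentially the same route as the paper: the paper's (very terse) proof likewise deduces the theorem from Lemma~\ref{gen of tilde nil} together with the splitting $\widetilde{{\rm Nil}}_{1}(R)\cong {\rm Nil}_{1}(R)\oplus \widetilde{T}_{R}^{1}$ of Lemma~\ref{useful for nil1}. The only difference is that the paper leaves implicit the step you spell out in detail, namely that generators with $f=f^{'}$ are differences of classes of diagonal complexes and hence lie in $T_{{\bf Nil}(R)}^{1}\cap\widetilde{{\rm Nil}}_{1}(R)=\widetilde{T}_{R}^{1}$, so they vanish in the quotient ${\rm Nil}_{1}(R)$.
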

        
        \begin{proof}
         We know $\widetilde{{\rm Nil}}_{1}(R)\cong {\rm Nil}_{1}(R)\oplus \widetilde{T}_{R}^{1}$ (see Lemma \ref{useful for nil1}). Hence the assertion by Lemma \ref{gen of tilde nil}.
        \end{proof}

        \vspace{1cm}
        \subsection*{Generators of ${\rm Nil}_{n}(R)$ for $n>1$}
        We consider the case ${\rm Nil}_{n}(R) (\cong NK_{n+1}(R))$ for $n>1.$
        \begin{lemma}\label{insert zero for n-complex}
 If $[(P_{*}, (p_{1}, p_{1}^{'}), (p_{2}, p_{2}^{'}), \dots, (p_{n}, p_{n}^{'})) ]=[(Q_{*}, (q_{1}, q_{1}^{'}), (q_{2}, q_{2}^{'}), \dots, (q_{n}, q_{n}^{'}))]$ in $K_{0}(B^{q})^{n}{\bf P}(R)$ then $$[(P_{*}, (p_{1}, p_{1}^{'}), (p_{2}, p_{2}^{'}), \dots, (p_{n}, p_{n}^{'}),0)]=[(Q_{*}, (q_{1}, q_{1}^{'}), (q_{2}, q_{2}^{'}), \dots, (q_{n}, q_{n}^{'}), 0)]$$ in $K_{0}(B^{q})^{n}{\bf Nil}(R).$
\end{lemma}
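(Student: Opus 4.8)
The plan is to mimic the proof of Lemma~\ref{insert zero} verbatim, now working one categorical level up. Recall that $(B^{q})^{n}{\bf Nil}(R)$ can be identified with ${\bf Nil}((B^{q})^{n}{\bf P}(R))$ since a nilpotent endomorphism commutes with all the differentials; appending a trailing $0$ to a multicomplex in $(B^{q})^{n}{\bf P}(R)$ is just the functor that views it as an object of $(B^{q})^{n}{\bf Nil}(R)$ equipped with the zero nilpotent endomorphism. The key point is that this ``append-zero'' operation is exact and additive: it sends short exact sequences in $(B^{q})^{n}{\bf P}(R)$ to short exact sequences in $(B^{q})^{n}{\bf Nil}(R)$, and it sends direct sums to direct sums.

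First I would invoke the standard criterion for equality of classes in a Grothendieck group (Exercise~II.7.2 of \cite{wei 1}): the hypothesis $[(P_{*}, (p_{i}, p_{i}^{'})_{i})] = [(Q_{*}, (q_{i}, q_{i}^{'})_{i})]$ in $K_{0}(B^{q})^{n}{\bf P}(R)$ yields short exact sequences
$$0 \to (C_{*}, (c_{i}, \tilde{c}_{i})_{i}) \to (A_{*}, (a_{i}, \tilde{a}_{i})_{i}) \to (D_{*}, (d_{i}, \tilde{d}_{i})_{i}) \to 0$$
and
$$0 \to (C_{*}, (c_{i}, \tilde{c}_{i})_{i}) \to (B_{*}, (b_{i}, \tilde{b}_{i})_{i}) \to (D_{*}, (d_{i}, \tilde{d}_{i})_{i}) \to 0$$
in $(B^{q})^{n}{\bf P}(R)$ together with an isomorphism
$$(P_{*}, (p_{i}, p_{i}^{'})_{i}) \oplus (A_{*}, (a_{i}, \tilde{a}_{i})_{i}) \cong (Q_{*}, (q_{i}, q_{i}^{'})_{i}) \oplus (B_{*}, (b_{i}, \tilde{b}_{i})_{i}).$$
Next I would append a trailing zero endomorphism throughout. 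The two displayed sequences become short exact sequences in $(B^{q})^{n}{\bf Nil}(R)$ (exactness is checked degreewise and is unaffected by carrying along the zero endomorphism, which trivially commutes with every differential), so in $K_{0}(B^{q})^{n}{\bf Nil}(R)$ we get
$$[(A_{*}, (a_{i}, \tilde{a}_{i})_{i}, 0)] = [(C_{*}, (c_{i}, \tilde{c}_{i})_{i}, 0)] + [(D_{*}, (d_{i}, \tilde{d}_{i})_{i}, 0)] = [(B_{*}, (b_{i}, \tilde{b}_{i})_{i}, 0)].$$
The isomorphism above, with zero endomorphisms appended on both sides, gives an isomorphism in $(B^{q})^{n}{\bf Nil}(R)$
$$(P_{*}, (p_{i}, p_{i}^{'})_{i}, 0) \oplus (A_{*}, (a_{i}, \tilde{a}_{i})_{i}, 0) \cong (Q_{*}, (q_{i}, q_{i}^{'})_{i}, 0) \oplus (B_{*}, (b_{i}, \tilde{b}_{i})_{i}, 0),$$
and combining this with the previous equality of classes forces $[(P_{*}, (p_{i}, p_{i}^{'})_{i}, 0)] = [(Q_{*}, (q_{i}, q_{i}^{'})_{i}, 0)]$ in $K_{0}(B^{q})^{n}{\bf Nil}(R)$.

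There is no real obstacle here; the only thing that needs a word is the observation that appending the zero endomorphism is a well-defined exact functor $(B^{q})^{n}{\bf P}(R) \to (B^{q})^{n}{\bf Nil}(R)$, which is immediate because $0$ commutes with all $2n$ differentials and with every morphism of multicomplexes. So the proof is a routine transcription of Lemma~\ref{insert zero}, with single differentials replaced by the $n$-tuples of pairs of differentials and the bookkeeping adjusted accordingly. I would keep the write-up short, essentially pointing to the proof of Lemma~\ref{insert zero} and indicating the notational changes rather than repeating every line.
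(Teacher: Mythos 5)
Your proposal is correct and is exactly what the paper does: its proof of this lemma simply says to rewrite the proof of Lemma~\ref{insert zero} for $(B^{q})^{n}{\bf P}(R)$, which is the transcription you carry out (Grothendieck-group criterion, append the zero endomorphism, note exactness is preserved). No differences worth noting.
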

\begin{proof}
 The proof is similar to Lemma \ref{insert zero}. More precisely, just rewrite the proof of Lemma \ref{insert zero} for $(B^{q})^{n}{\bf P}(R)$.
\end{proof}

\begin{lemma}\label{nil can be choosen zero for n-complexes}
 For every $(P_{*}, (p_{1}, p_{1}^{'}), (p_{2}, p_{2}^{'}), \dots, (p_{n}, p_{n}^{'}), \nu)$ in $(B^{q})^{n}{\bf Nil}(R)$ there exists a $(Q_{*}, (q, q), (q_{2}, q_{2}^{'}), \dots, (q_{n}, q_{n}^{'}), 0)$ in $(B^{q})^{n}{\bf Nil}(R)$ such that \tiny $$(P_{*}, (p_{1}, p_{1}^{'}), (p_{2}, p_{2}^{'}), \dots, (p_{n}, p_{n}^{'}), \nu)\oplus (Q_{*}, (q, q), (q_{2}, q_{2}^{'}), \dots, (q_{n}, q_{n}^{'}), 0)\cong (F_{*}, (f_{1}, f_{1}^{'}), (f_{2}, f_{2}^{'}), \dots, (f_{n}, f_{n}^{'}),  \nu^{'}),$$ \normalsize where $(F_{*}, (f_{1}, f_{1}^{'}), (f_{2}, f_{2}^{'}), \dots, (f_{n}, f_{n}^{'}), \nu^{'})$ is in $(B^{q})^{n}{\bf Nil(Free}(R)).$                                                                                                                                                                                                                                        
                                                                                                                                                                                                                                    \end{lemma}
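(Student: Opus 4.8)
The plan is to mimic the proof of Lemma \ref{nil can be choosen zero}, this time invoking Grayson's cofinality lemma (Lemma 6.2 of \cite{Gray}) at the outermost coordinate of the multicomplex. First I would forget the nilpotent endomorphism and regard $(P_{*}, (p_{1}, p_{1}^{'}), \dots, (p_{n}, p_{n}^{'}))$ as an object of $(B^{q})^{n}{\bf P}(R)$. Writing $(B^{q})^{n}{\bf P}(R)=B^{q}\big((B^{q})^{n-1}{\bf P}(R)\big)$, where the inner $(B^{q})^{n-1}$ records the differentials in directions $2,\dots,n$, one first needs that $(B^{q})^{n-1}{\bf Free}(R)$ is an exact full subcategory of $(B^{q})^{n-1}{\bf P}(R)$ that is cofinal and closed under extensions. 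This follows by iterating the argument of Lemma \ref{gray cor} (equivalently, by $n-1$ successive applications of Lemma 6.2 of \cite{Gray}), starting from the fact that ${\bf Free}(R)\subseteq{\bf P}(R)$ has these properties.

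Granting this, Lemma 6.2 of \cite{Gray} applied to $(P_{*}, (p_{1}, p_{1}^{'}), \dots, (p_{n}, p_{n}^{'}))\in B^{q}\big((B^{q})^{n-1}{\bf P}(R)\big)$ produces an acyclic complex $(Q_{*}, q)\in C^{q}\big((B^{q})^{n-1}{\bf P}(R)\big)$ --- that is, an object $(Q_{*}, (q, q), (q_{2}, q_{2}^{'}), \dots, (q_{n}, q_{n}^{'}))$ of $(B^{q})^{n}{\bf P}(R)$ whose first differential pair $(q,q)$ is diagonal --- together with an isomorphism
\[
\alpha\colon (P_{*}, (p_{1}, p_{1}^{'}), \dots, (p_{n}, p_{n}^{'}))\oplus (Q_{*}, (q, q), (q_{2}, q_{2}^{'}), \dots, (q_{n}, q_{n}^{'}))\;\cong\;(F_{*}, (f_{1}, f_{1}^{'}), \dots, (f_{n}, f_{n}^{'}))
\]
in $(B^{q})^{n}{\bf Free}(R)$. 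Since $0$ commutes with every differential and is nilpotent, $(Q_{*}, (q, q), (q_{2}, q_{2}^{'}), \dots, (q_{n}, q_{n}^{'}), 0)$ is an object of $(B^{q})^{n}{\bf Nil}(R)$, so it is the required complementary object.

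It then remains to transport $\nu$ across $\alpha$. Set $\nu^{'}:=\alpha(\nu, 0)\alpha^{-1}$, an endomorphism of the underlying graded object $F_{*}$. Exactly as in Lemma \ref{nil can be choosen zero}: $\alpha$ and $\alpha^{-1}$ commute with every $f_{i}$ and $f_{i}^{'}$ (being morphisms of multicomplexes), $\nu$ commutes with every $p_{i}$ and $p_{i}^{'}$ (since $(P_{*},\dots,\nu)\in(B^{q})^{n}{\bf Nil}(R)$), and $0$ commutes with every $q, q_{i}, q_{i}^{'}$; hence $(\nu,0)$ commutes with $(p_{i},q)$ and with $(p_{i}^{'},q)$ in each direction, and the one-line computation from the proof of Lemma \ref{nil can be choosen zero}, carried out once per direction $i$, gives $f_{i}\nu^{'}=\nu^{'}f_{i}$ and $f_{i}^{'}\nu^{'}=\nu^{'}f_{i}^{'}$. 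Nilpotency is immediate, since $(\nu,0)^{k}=(\nu^{k},0)=0$ for $k\gg 0$, whence $(\nu^{'})^{k}=\alpha(\nu,0)^{k}\alpha^{-1}=0$. Thus $(F_{*}, (f_{1}, f_{1}^{'}), \dots, (f_{n}, f_{n}^{'}), \nu^{'})\in(B^{q})^{n}{\bf Nil(Free}(R))$ and $\alpha$ upgrades to the desired isomorphism.

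I expect the only genuinely delicate point to be the first one: checking that $(B^{q})^{n-1}{\bf Free}(R)$ is cofinal and closed under extensions inside $(B^{q})^{n-1}{\bf P}(R)$, which is what licenses applying Lemma 6.2 of \cite{Gray} with direction $1$ as the outer binary-complex coordinate and, in turn, forces the complement $(Q_{*}, (q,q), \dots)$ to be diagonal in that direction --- matching the shape asserted in the statement. After that, the construction of $\nu^{'}$ and the commutation and nilpotency verifications are a direction-by-direction repetition of the $n=1$ case with no new input.
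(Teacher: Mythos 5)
Your proposal is correct and follows the same route as the paper's proof: iterate the cofinality/closure-under-extensions property to $(B^{q})^{n-1}{\bf Free}(R)\subseteq (B^{q})^{n-1}{\bf P}(R)$, apply Lemma 6.2 of \cite{Gray} in the first direction to obtain the diagonal complement $(Q_{*},(q,q),\dots)$ and the isomorphism $\alpha$, then set $\nu^{'}=\alpha(\nu,0)\alpha^{-1}$ and verify commutation with each $f_{i}, f_{i}^{'}$ and nilpotency exactly as in the $n=1$ case. Your explicit treatment of the outer-coordinate decomposition $(B^{q})^{n}{\bf P}(R)=B^{q}\big((B^{q})^{n-1}{\bf P}(R)\big)$ is in fact slightly more careful than the paper's one-line appeal to ``repeatedly using'' the cofinality lemma.
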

                                                                                                                                                                                                                                    
                                                                                                                                                                                                                                    \begin{proof}
  By repeatadly using Lemma \ref{gray cor}, $(B^{q})^{n-1}{\bf Free}(R)\subseteq (B^{q})^{n-1}{\bf P}(R)$ is cofinal and closed under extensions. By Lemma 6.2 of \cite{Gray}, for  $(P_{*}, (p_{1}, p_{1}^{'}), (p_{2}, p_{2}^{'}), \dots, (p_{n}, p_{n}^{'}))\in (B^{q})^{n}{\bf P}(R)$ there exists a $(Q_{*}, (q, q), (q_{2}, q_{2}^{'}), \dots, (q_{n}, q_{n}^{'}))$ such that                                                                                                                                                                                                                                  
             \tiny $$(P_{*}, (p_{1}, p_{1}^{'}), (p_{2}, p_{2}^{'}), \dots, (p_{n}, p_{n}^{'}))\oplus (Q_{*}, (q, q), (q_{2}, q_{2}^{'}), \dots, (q_{n}, q_{n}^{'}))\stackrel{\alpha}\cong (F_{*}, (f_{1}, f_{1}^{'}), (f_{2}, f_{2}^{'}), \dots, (f_{n}, f_{n}^{'})),$$ \normalsize where $(F_{*}, (f_{1}, f_{1}^{'}), (f_{2}, f_{2}^{'}), \dots, (f_{n}, f_{n}^{'}))\in (B^{q})^{n}{\bf Free}(R).$ Define $\nu^{'}:= \alpha(\nu, 0) \alpha^{-1}.$ Note that $\alpha$ and $\alpha^{-1}$ commute with differentials in each direction. The rest of the argument is similar to Lemma \ref{nil can be choosen zero}.                                                                                                                                                                                                                    \end{proof}
             Recall that $\widetilde{{\rm Nil}}_{n}(R)$ denotes the kernel of $$K_{0}(B^{q})^{n} {\bf Nil}(R) \to K_{0}(B^{q})^{n}{\bf P}(R),$$  $$[(P_{*}, (p_{1}, p_{1}^{'}), (p_{2}, p_{2}^{'}), \dots, (p_{n}, p_{n}^{'}), \nu)]\mapsto [(P_{*}, (p_{1}, p_{1}^{'}), (p_{2}, p_{2}^{'}), \dots, (p_{n}, p_{n}^{'})].$$ 
             
             \begin{lemma}
              The group  $\widetilde{{\rm Nil}}_{n}(R)$ is generated by elements of the form
$$[(F_{*}, (f_{1}, f_{1}^{'}), (f_{2}, f_{2}^{'}), \dots, (f_{n}, f_{n}^{'}), \nu)]- [(F_{*}, (f_{1}, f_{1}^{'}), (f_{2}, f_{2}^{'}), \dots, (f_{n}, f_{n}^{'}), 0)],$$ where $(F_{*}, (f_{1}, f_{1}^{'}), (f_{2}, f_{2}^{'}), \dots, (f_{n}, f_{n}^{'}), \nu)$ and $(F_{*}, (f_{1}, f_{1}^{'}), (f_{2}, f_{2}^{'}), \dots, (f_{n}, f_{n}^{'}), 0)$ both are in $(B^{q})^{n}{\bf Nil(Free}(R)).$
             \end{lemma}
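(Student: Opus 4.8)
The plan is to mimic, almost verbatim, the proof of Lemma~\ref{gen of tilde nil}, with the one-directional binary complexes replaced by $n$-dimensional binary multicomplexes. First I would invoke Lemma~\ref{gray cor} iteratively to conclude that $(B^{q})^{n}{\bf Nil(Free}(R))\subseteq (B^{q})^{n}{\bf Nil}(R)$ is cofinal and closed under extensions. By the standard description of $K_{0}$ of a category with a cofinal subcategory (Remark~II.7.2.1 of \cite{wei 1}), every $x\in K_{0}(B^{q})^{n}{\bf Nil}(R)$ can be written as $[(P_{*}, (p_{1}, p_{1}^{'}), \dots, (p_{n}, p_{n}^{'}), \nu)] - [(F_{*}, (f_{1}, f_{1}^{'}), \dots, (f_{n}, f_{n}^{'}), \tilde{\nu})]$, where the subtracted term already lies in $(B^{q})^{n}{\bf Nil(Free}(R))$. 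The task is then to rewrite the first term, modulo the free part, as a difference of a "$\nu^{'}$" class and a "$0$" class over a free multicomplex.

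Next I would impose the hypothesis $x\in \widetilde{{\rm Nil}}_{n}(R)$, which says precisely that the underlying multicomplexes satisfy $[(P_{*}, (p_{1}, p_{1}^{'}), \dots)] = [(F_{*}, (f_{1}, f_{1}^{'}), \dots)]$ in $K_{0}(B^{q})^{n}{\bf P}(R)$. Applying Lemma~6.2 of \cite{Gray} to $(P_{*}, (p_{1}, p_{1}^{'}), \dots)$ produces a $(Q_{*}, (q, q), (q_{2}, q_{2}^{'}), \dots)$ with $[(P_{*}, \dots)] + [(Q_{*}, \dots)] = [(\tilde{F}_{*}, \dots)]$ for some object of $(B^{q})^{n}{\bf Free}(R)$; Lemma~\ref{nil can be choosen zero for n-complexes} upgrades this equality to one in $K_{0}(B^{q})^{n}{\bf Nil}(R)$, namely $[(P_{*}, \dots, \nu)] + [(Q_{*}, \dots, 0)] = [(\tilde{F}_{*}, \dots, \nu^{'})]$. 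Meanwhile, feeding the underlying $K_{0}$-equality through Lemma~\ref{insert zero for n-complex} gives $[(F_{*}, \dots, 0)] + [(Q_{*}, \dots, 0)] = [(\tilde{F}_{*}, \dots, 0)]$ in $K_{0}(B^{q})^{n}{\bf Nil}(R)$. Subtracting these two identities eliminates the $(Q_{*}, \dots, 0)$ term and yields
\[
[(P_{*}, \dots, \nu)] = \big([(\tilde{F}_{*}, \dots, \nu^{'})] - [(\tilde{F}_{*}, \dots, 0)]\big) + [(F_{*}, \dots, 0)],
\]
so that $x = \big([(\tilde{F}_{*}, \dots, \nu^{'})] - [(\tilde{F}_{*}, \dots, 0)]\big) - \big([(F_{*}, \dots, \tilde{\nu})] - [(F_{*}, \dots, 0)]\big)$, a difference of two generators of the claimed form.

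Since all the heavy lifting has already been carried out in Lemmas~\ref{insert zero for n-complex} and~\ref{nil can be choosen zero for n-complexes}, the proof is essentially bookkeeping: the only thing to be careful about is that the extra $(Q_{*}, \dots)$ in Lemma~\ref{nil can be choosen zero for n-complexes} is chosen diagonal in the first binary direction (so that Lemma~6.2 of \cite{Gray} applies to it), and that the \emph{same} $(Q_{*}, \dots)$ is used when invoking Lemma~\ref{insert zero for n-complex}. Consequently the honest write-up can simply read: ``The proof is identical to that of Lemma~\ref{gen of tilde nil}, replacing everywhere $B^{q}{\bf P}(R)$, $B^{q}{\bf Nil}(R)$, $B^{q}{\bf Nil(Free}(R))$ by $(B^{q})^{n}{\bf P}(R)$, $(B^{q})^{n}{\bf Nil}(R)$, $(B^{q})^{n}{\bf Nil(Free}(R))$, and using Lemmas~\ref{insert zero for n-complex} and~\ref{nil can be choosen zero for n-complexes} in place of Lemmas~\ref{insert zero} and~\ref{nil can be choosen zero}.'' The one genuine (if minor) obstacle is notational: keeping the $2n$-tuple of differentials under control through the chain of equalities without errors, which is why phrasing it as a reduction to the $n=1$ case is the cleanest route.
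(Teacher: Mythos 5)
Your proposal is correct and follows essentially the same route as the paper: the authors likewise reduce to the argument for $\widetilde{{\rm Nil}}_{1}(R)$, citing Lemma \ref{gray cor} for cofinality, Remark II.7.2.1 of \cite{wei 1} for the difference representation, and Lemmas \ref{insert zero for n-complex} and \ref{nil can be choosen zero for n-complexes} in place of their one-dimensional counterparts. Your remark about choosing $(Q_{*},(q,q),\dots)$ diagonal in the first direction and reusing the same $Q_{*}$ in both lemmas matches the paper's setup exactly.
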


      \begin{proof}       By Lemma \ref{gray cor}, $(B^{q})^{n}{\bf Nil(Free}(R))\subseteq (B^{q})^{n}{\bf Nil}(R)$ is cofinal with closed under extensions. Let $x\in K_{0}(B^{q})^{n} {\bf Nil}(R).$ By applying Remark II.7.2.1 of \cite{wei 1} for the categories $(B^{q})^{n}{\bf Nil(Free}(R))\subseteq (B^{q})^{n}{\bf Nil}(R),$ we get $$x=  [(P_{*}, (p_{1}, p_{1}^{'}), (p_{2}, p_{2}^{'}), \dots, (p_{n}, p_{n}^{'}), \nu)]- [(F_{*}, (f_{1}, f_{1}^{'}), (f_{2}, f_{2}^{'}), \dots, (f_{n}, f_{n}^{'}), \tilde{\nu})],~ {\rm where}~ $$  $(P_{*}, (p_{1}, p_{1}^{'}), (p_{2}, p_{2}^{'}), \dots, (p_{n}, p_{n}^{'}), \nu) \in (B^{q})^{n}{\bf Nil}(R)$ and $(F_{*}, (f_{1}, f_{1}^{'}), (f_{2}, f_{2}^{'}), \dots, (f_{n}, f_{n}^{'}), \tilde{\nu})\\ \in (B^{q})^{n}{\bf Nil(Free}(R)).$  Suppose  $x\in \widetilde{{\rm Nil}}_{n}(R)$. Thus, $$[(P_{*}, (p_{1}, p_{1}^{'}), (p_{2}, p_{2}^{'}), \dots, (p_{n}, p_{n}^{'}))]= [(F_{*}, (f_{1}, f_{1}^{'}), (f_{2}, f_{2}^{'}), \dots, (f_{n}, f_{n}^{'}))]$$ in $K_{0}(B^{q})^{n}{\bf P}(R).$ The rest of the argument is similar to the case of $\widetilde{{\rm Nil}}_{1}(R)$. By using Lemmas \ref{insert zero for n-complex} and \ref{nil can be choosen zero for n-complexes}, we get \begin{equation}\label{  n complexes expression}\begin{split}
        x= ([(\tilde{F}_{*}, (\tilde{f_{1}}, \tilde{f_{1}}^{'}), (\tilde{f_{2}}, \tilde{f_{2}}^{'}), \dots, (\tilde{f_{n}}, \tilde{f_{n}}^{'}),\nu^{'})]- [(\tilde{F}_{*}, (\tilde{f_{1}}, \tilde{f_{1}}^{'}), (\tilde{f_{2}}, \tilde{f_{2}}^{'}), \dots, (\tilde{f_{n}}, \tilde{f_{n}}^{'}), 0)])\\- ([(F_{*}, (f_{1}, f_{1}^{'}), (f_{2}, f_{2}^{'}), \dots, (f_{n}, f_{n}^{'}), \tilde{\nu})]- [(F_{*}, (f_{1}, f_{1}^{'}), (f_{2}, f_{2}^{'}), \dots, (f_{n}, f_{n}^{'}), 0)]),\end{split}
        \end{equation} where all the entries are in $(B^{q})^{n}{\bf Nil(Free}(R)).$ Therefore, $\widetilde{{\rm Nil}}_{n}(R)$ is generated by elements of the form
$$[(F_{*}, (f_{1}, f_{1}^{'}), (f_{2}, f_{2}^{'}), \dots, (f_{n}, f_{n}^{'}), \nu)]- [(F_{*}, (f_{1}, f_{1}^{'}), (f_{2}, f_{2}^{'}), \dots, (f_{n}, f_{n}^{'}), 0)].$$\end{proof}
By Lemma \ref{useful for nil1}, ${\rm Nil}_{n}(R)\cong \widetilde{{\rm Nil}}_{n}(R)/ \widetilde{T}_{R}^{n}.$ Hence we get, 

\begin{theorem}\label{gen for Nil_n}
 The group ${\rm Nil}_{n}(R)$ is generated by elements of the form \small$$([(F_{*}, (f_{1}, f_{1}^{'}), (f_{2}, f_{2}^{'}), \dots, (f_{n}, f_{n}^{'}), \nu)]- [(F_{*}, (f_{1}, f_{1}^{'}), (f_{2}, f_{2}^{'}), \dots, (f_{n}, f_{n}^{'}), 0)])(~{\rm mod} ~\widetilde{T}_{R}^{n}),$$ \normalsize where $(F_{*}, (f_{1}, f_{1}^{'}), (f_{2}, f_{2}^{'}), \dots, (f_{n}, f_{n}^{'}), \nu)), (F_{*}, (f_{1}, f_{1}^{'}), (f_{2}, f_{2}^{'}), \dots, (f_{n}, f_{n}^{'}), 0))$ are objects of $(B^{q})^{n}{\bf Nil(Free}(R)).$
\end{theorem}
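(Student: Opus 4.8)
The plan is to obtain Theorem~\ref{gen for Nil_n} as an essentially formal consequence of the Lemma immediately preceding it together with the short exact sequence~\eqref{seq for nil} of Lemma~\ref{useful for nil1}. So the proof will be short: identify ${\rm Nil}_{n}(R)$ with a quotient of $\widetilde{{\rm Nil}}_{n}(R)$, and then transport the generating set already produced for $\widetilde{{\rm Nil}}_{n}(R)$ through the quotient map.

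First I would recall from Lemma~\ref{useful for nil1} that for $n\geq 1$ the sequence $0\to\widetilde{T}_{R}^{n}\to\widetilde{{\rm Nil}}_{n}(R)\to{\rm Nil}_{n}(R)\to 0$ is exact, so that ${\rm Nil}_{n}(R)\cong\widetilde{{\rm Nil}}_{n}(R)/\widetilde{T}_{R}^{n}$ via reduction mod $\widetilde{T}_{R}^{n}$. Since the image of a generating set of a group is a generating set of any quotient, it then suffices to invoke the preceding Lemma, which states that $\widetilde{{\rm Nil}}_{n}(R)$ is generated by the elements $[(F_{*},(f_{1},f_{1}'),\dots,(f_{n},f_{n}'),\nu)]-[(F_{*},(f_{1},f_{1}'),\dots,(f_{n},f_{n}'),0)]$ with all entries in $(B^{q})^{n}{\bf Nil(Free}(R))$. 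Reducing these classes modulo $\widetilde{T}_{R}^{n}$ yields precisely the asserted generators of ${\rm Nil}_{n}(R)$, and the proof is complete.

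The genuine content — already discharged in the Lemmas quoted above, and hence not something I would redo here — is twofold: that one may restrict attention to multicomplexes of free modules equipped with a nilpotent endomorphism (this is the cofinality input, Lemma~\ref{gray cor} and Lemma~\ref{nil can be choosen zero for n-complexes}, mirroring Lemma~6.2 of \cite{Gray}), and that passing to the acyclic-binary-multicomplex picture is harmless (Lemma~\ref{insert zero for n-complex}). Given these, the remaining argument is a routine manipulation of classes in $K_{0}(B^{q})^{n}{\bf Nil}(R)$, structurally identical to the $n=1$ case handled in Theorem~\ref{gen for Nil1}. I do not expect any real obstacle; the only point worth flagging is that, unlike the $n=1$ case of Lemma~\ref{useful for nil1}, the sequence~\eqref{seq for nil} need not split for $n>1$, which is exactly why the generators can only be described modulo $\widetilde{T}_{R}^{n}$ rather than as an honest direct summand.
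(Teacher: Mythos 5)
Your proposal is correct and matches the paper's own argument exactly: the paper likewise notes that Lemma~\ref{useful for nil1} gives ${\rm Nil}_{n}(R)\cong \widetilde{{\rm Nil}}_{n}(R)/\widetilde{T}_{R}^{n}$ and then reads off the generators from the immediately preceding Lemma by reduction modulo $\widetilde{T}_{R}^{n}$. Your additional remark about why the splitting is only available for $n=1$ is accurate and explains the ``$\bmod~\widetilde{T}_{R}^{n}$'' phrasing of the statement.
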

\begin{remark}\label{length 2 enough}{\rm 
 D. Grayson in \cite[Remark 8.1]{Gray} remarked that acyclic binary multicomplexes supported on $[0, 2]^{n}$ suffice to generate whole group $K_{n}\mathcal{N}.$ Recently, D. Kasprowski and C. Winges establish Grayson's remark in \cite{KW}(more precisely, see Theorem 1.3 of \cite{KW}). In view of \cite{KW}, generators of ${\rm Nil}_{n}(R)$ for $n>0$ obtained in Theorems \ref{gen for Nil1} and \ref{gen for Nil_n} can be restricted to  acyclic binary multicomplexes supported on $[0, 2]^{n}.$}
\end{remark}

\end{document}